\DeclareMathOperator{\id}{id}
\DeclareMathOperator{\Aut}{Aut}
\DeclareMathOperator*{\fibersum}{\#}
\DeclareMathOperator*{\union}{\cup}
\DeclareMathOperator*{\disunion}{\sqcup}
\newcommand{\Z}{\mathbb{Z}}
\newcommand{\Q}{\mathbb{Q}}
\newcommand{\C}{\mathbb{C}}
\newcommand{\R}{\mathbb{R}}
\newcommand{\D}{D}
\newcommand{\T}{\mathbb{T}}
\newcommand{\Tw}{{\mathbb{T}\mathrm{w}}}
\newcommand{\into}{\hookrightarrow}
\theoremstyle{definition} \newtheorem{theorem}{Theorem}[subsection]
\theoremstyle{remark}     \newtheorem{proposition}[theorem]{Proposition}
\theoremstyle{remark}     
\theoremstyle{definition} \newtheorem{definition}[theorem]{Definition}
\theoremstyle{remark}     \newtheorem{lemma}[theorem]{Lemma}
\theoremstyle{remark}     \newtheorem{lemma*}{Lemma}
\theoremstyle{remark}     \newtheorem{construction}[theorem]{Construction}
\title[Invariants of Montesinos Twins]%
{Invariants of Montesinos Twins}
\author[A. C. Knapp]{Adam C. Knapp}
\address{Department of Mathematics\\ Columbia University\\ New York, NY 10027}
\email{\href{mailto:knappa@math.msu.edu}{knappa@math.msu.edu}}
\keywords{}
\thanks{Partially supported by NSF Grant DMS0305818 and DMS0244663 (FRG)}
\begin{document}
\ifthenelse{\pdfoutput > 0}%
{\DeclareGraphicsExtensions{.pdf,.jpg,.png,.mps}}%
{\DeclareGraphicsExtensions{.mps,.eps,.ps,.jpg,.png}}

\bibliographystyle{alpha}

\begin{abstract}
  In \cite{MR674227}, C. Giller proposed an invariant of ribbon
  $2$-knots in $S^4$ based on a type of skein relation for a
  projection to $\R^3$. In certain cases, this invariant is equal to
  the Alexander polynomial for the $2$-knot.  Giller's invariant is,
  however, a symmetric polynomial --- which the Alexander polynomial
  of a $2$-knot need not be.  After modifying a $2$-knot into a
  Montesinos twin in a natural way, we show that Giller's invariant is
  related to the Seiberg-Witten invariant of the exterior of the twin,
  glued to the complement of a fiber in $E(2)$.
\end{abstract}

\maketitle

\section{Introduction}

A smooth {\em $2$-knot} \index{knot!$2$-knot} $K$ in the $4$-sphere
is an embedded $S^2$ in $S^4$ considered up to smooth isotopy.  We
say that $K$ is unknotted when it bounds a smoothly embedded
$\D^3\subset S^4$.  In contrast to classical knot theory, $2$-knots
display many pathologies; their exteriors need not be a $K(\pi_1,1)$
\cite{MR0107239} nor does the homeomorphism type of the knot
complement determine the knot's isotopy class
\cite{MR0440561}. 
After a surgery which replaces the regular neighborhood $\nu K \cong
S^2\times \D^2$ with $\D^3\times S^1$ we obtain a homology $S^1\times
S^3$.  With $b_2^+=b_2=0$ for such a manifold, current tools for
smooth $4$-manifolds offer few invariants.

However, the natural generalizations of gadgets like the Alexander
polynomial or ideal can be computed for $2$-knots.  In
\cite{MR674227}, C.  Giller proposed a definition for an invariant we
will call $\Delta_G$.  The proposed invariant is derived from a kind
of skein relation on the projection to $\R^3$ of a $2$-knot analogous
to the Conway skein relation for projections of $1$-knots to
$\R^2$. For a certain class of $2$-knots, $\Delta_G$ is known to
compute the Alexander polynomial.  This relation, which we discuss
more thoroughly in section~\ref{sec:giller}, always results in a
symmetric polynomial.  However, the Alexander polynomial of a $2$-knot
need not be symmetrizable.  For example, the knot shown in
Figure~\ref{fig:giller-ex} has Alexander polynomial $\Delta_A=1-2t$
while Giller's polynomial is $\Delta_G=t^{-2}-1+t^{2}$. Further
complicating matters is that Giller's polynomial was not actually
known to be an invariant.

If instead of $2$-knots, we look at ``twins'' in $S^4$, we have
objects to which gauge-theoretic methods can be applied.  It is in
this context we see that Giller's polynomial computes, in the relevant
cases, the Seiberg-Witten invariant of the exterior of the twin.

\subsection{Twins} 

In \cite{MR698205} and \cite{MR734666}, J. M.  Montesinos introduced
the concept of a twin in $S^4$.  Such an object consists of a pair of
$2$-knots $K_1,K_2$ which meet transversely
twice.\index{twin!Montesinos} As the second homology of $S^4$ is
trivial, these intersection points must come with opposite signs.  By
``standard twins'', we will mean that both $K_i$ are unknotted and
that the exterior of the pair is diffeomorphic to $T^2\times
\D^2$. \index{twin!standard} We construct these explicitly below. In
general, the exterior of a twin is a homology $T^2\times \D^2$ with
boundary diffeomorphic to $T^3$.

As $\pi_2(SO(2))$ is trivial, an orientation on an $2$-knot $K$ in
$S^4$ determines a trivialization $\nu S^2 \cong S^2\times \D^2$.
That is, for a $2$-knot $K$, all framings are equivalent to the
Seifert\footnote{A Seifert manifold for a surface $\Sigma$ in $S^4$ is
  a $3$ manifold $M$ with boundary diffeomorphic $\Sigma$, smoothly
  embedded in $S^4$ so that $\partial M = \Sigma$.  As in
  \cite{MR0515288}, Seifert manifolds exist because of the following:
  Consider the map $\Sigma\times \partial \D^2 \to \partial \D^2$
  which is given by a trivialization of the normal bundle of
  $\Sigma$. Obstructions to extending this map over all of
  $S^4\setminus \nu \Sigma$ vanish, giving us a map $S^4\setminus \nu
  \Sigma \to S^1$. We can homotope this map to a smooth map which
  remains equal to the projection $\Sigma\times \partial \D^2
  \to \partial \D^2$ on a tubular neighborhood of the boundary. Then
  as $S^4\setminus \nu \Sigma$ is compact there are a finite number of
  critical points. Let $M$ be the preimage of one of the regular
  points. } framing.  Fix orientations on the $2$-knots, $K_1,K_2$, in
a twin $\Tw$ and consider $\partial \nu \Tw \cong T^3$.  Take a simple
closed curve $\gamma$ on the twin passing through the intersection
points of $K_1,K_2$ and lift it to $\partial \nu \Tw$ using the
framings.  All such $\gamma$ are isotopic on the twin and, as each
$K_i$ has a single framing, this lift is canonical.  This decomposes
$\partial \nu \Tw = \gamma \times T^2$ The boundaries of normal discs
$D^2_1,D^2_2$ to $K_1,K_2$ refine the decomposition to $\partial \nu
\Tw = \gamma \times \partial D^2_1 \times \partial D^2_2$.

With this decomposition in mind, we define a standard surgery for a
twin $\Tw$ in $S^4$. Let
\begin{displaymath}
  S^4_{\Tw}
  =
  (S^4\setminus \Tw) \union_{\phi} (T^2\times \D^2)  
\end{displaymath}
where $\phi:T^3 \to T^3$ identifies the $\partial D^2$ from $T^2\times
\D^2$ with $\gamma$.  Up to isotopy, any two such $\phi$ differ by a
diffeomorphism $T^2\times\text{pt}\to T^2\times\text{pt}$. Since any
such map extends over $T^2\times \D^2$, the resulting manifold only
depends on the embedding of the twin.  Also, $H_*(S^4_{\Tw};\Z)\cong
H_*(T^2\times S^2;\Z)$ so that the $2$-torus core, $T_{\Tw}$, of the
surgery is identified with $T^2\times pt\subset T^2\times S^2$ as a
homology class.

\subsection{Definition of the invariant for twins}

Let $F$ be a smooth fiber in an elliptic fibration of $E(2)$, the
underlying smooth manifold for a complex $K3$ surface.  The regular
neighborhood of the fiber, $\nu F$, comes with a trivialization $\nu
F=D^2\times T^2$ induced by the fibration giving us the identification
$\partial \nu F = \partial D^2\times F$.  Taking a twin $\Tw \subset
S^4$, we also have a decomposition $\partial{N}(\Tw)=\gamma\times
\partial D^2_1 \times \partial D^2_2$.  If we fix an identification of
this $\partial D^2_1 \times \partial D^2_2$ and $F$
and an orientation reversing diffeomorphism between $\partial D^2$ and
$\gamma$, we obtain an identification $\phi$ of
$\partial(E(2)\setminus \nu F)$ and $\partial(S^4\setminus \Tw)$.
Let
\begin{displaymath}
  E(2)_{\Tw}
  =
  (E(2)\setminus \nu F)\union_\phi (S^4\setminus \Tw).  
\end{displaymath}
Since any automorphism of $F$ exends smoothly over $E(2)\setminus \nu
F$, this construction depends on the smooth isotopy class of $F$ but
is independant, up to diffeomorphism, of the particular choice of
$\phi$. As $\pi_1(E(2)\setminus \nu F)=0$ and the image of
$\pi_1(T^3)$ normally generates $\pi_1(S^4\setminus \Tw)$,
$E(2)_{\Tw}$ is simply connected. Then an easy Meyer-Vietoris argument
shows that $E(2)$ and $E(2)_{\Tw}$ have the same cohomology ring. By
Freedman's theorem, $E(2)$ and $E(2)_{\Tw}$ are homeomorphic.

This procedure may also be thought of as the generalized fiber sum of
$E(2)$ and $S^4_{\Tw}$ along $F$ and the core $T_{\Tw}$ of the
surgered twin with its default framing.  As such, we will sometimes
write $E(2)_{\Tw}$ as $E(2) \fibersum_{F=T_{\Tw}} S^4_{\Tw}$. 

The invariant of the twin $\Tw$ which we will consider will be the
Seiberg-Witten invariant of $E(2)_{\Tw}$.  i.e.
\begin{definition}
  $I(\Tw) = SW(E(2)_{\Tw})$ thought of as an element of the group ring
  $\Z\left[ H_{2}(E(2))\right]$.
\end{definition} \index{twin!invariant} 

Precisely, $SW(E(2)_{\Tw})$ lives in $\Z\left[ H_2 (E(2)_{\Tw})
\right]$ which we identify with $H_{2}(E(2))$ by a homomorphism which
extends the identity map on $H_{2}(E(2)_{\Tw}\setminus (S^4\setminus
\nu \Tw))\to H_2(E(2)\setminus \nu F)$. The invariant is well defined
up to a sign which depends on a homology orientation of
$H_0(E(2))\oplus \det H_{2}^{+}(E(2)) \oplus H_{1}(E(2))$

As the standard twins $\Tw_{std}$ have complement diffeomorphic to
$T^2\times \D^2$, $E(2)_{\Tw_{std}}$ is diffeomorphic to $E(2)$ and
\begin{equation}
  \label{eq:I-std-twin-relation}
  I(\Tw_{std}) = SW(E(2)) = 1.  
\end{equation}

Now suppose that we have a twin $\Tw$ and a disjoint torus $T$ in
$S^4$.  As $T$ is null-homologous, it has a canonical framing given by
the outward normal of its Seifert manifold. We can then form the self
fiber sum $S^4_{\Tw}\fibersum_{T_\Tw=T}$. With a choice of an
identification of $T_\Tw$ and $T$, the result is well defined and any
choice of identification will have the same Seiberg-Witten polynomial,
due to the gluing formulas of \cite{MR1492130}. Then form
\begin{displaymath}
 E(2)_{\Tw,T}
 =
 E(2) \fibersum_{F=T_{\Tw}'} \left( S^4_{\Tw}\fibersum_{T_\Tw=T} \right) 
\end{displaymath}
with $T_{\Tw}'$ a pushoff of $T_{\Tw}$.  Our new manifold
$E(2)_{\Tw,T}$ is a homology $E(2)\fibersum_{F=F'}$ with $F,F'$
elliptic fibers and $SW(E(2)\fibersum_{F=F'})=(t-t^{-1})^2$ with
$t=\exp([F])$. See \cite{MR1492130}. We define
\begin{definition}
  $I(\Tw,T) = (t-t^{-1})^{-1}SW(E(2)_{\Tw,T})$ thought of as a
  element of the group ring $\Z\left[H_2(E(2);\Z)\right]$. 
\end{definition} \index{twin!twin-torus invariant} 

\subsection{Construction of Knotted twins}

\begin{construction}[Connect sum]
  Take $K_0$, a knotted $S^2$, and a twin $\Tw=K_1\union K_2$ in
  $S^4$.  Then, selecting one of the spheres $K_1$ in the twin, we form
  the connected sum $(S^4\fibersum S^4,K_0\fibersum K_1)$ at some
  point away from the 2 double points of the twins.  (This construction
  is not independent of the choice of $K_1,K_2$ in $\Tw$ in general.)
\end{construction} \index{connect sum}

If we take $\Tw$ to be the standard twins, then this construction {\em
  is} independent of the choice of $S^2$s and provides a handy method
for studying $2$-knots via twins.  This independence is due to the
existence of a orientation preserving diffeomorphism $\rho$ of $S^4$
which interchanges $K_1,K_2$ in standard twins. The diffeomorphism
$\rho$ is constructed as follows: View $S^4$ as $\nu \Tw_{std}\union
T^2\times \D^2$. Define $\rho$ on $\nu \Tw_{std}$ to be the obvious
map which interchanges $K_1$ and $K_2$. Then $\rho$ induces the map
\begin{displaymath}
  \left[\begin{array}{ccc}
      0 & 1 & 0 \\
      1 & 0 & 0 \\
      0 & 0 & 1 
  \end{array}\right]
\end{displaymath}
on $\partial \nu \Tw_{std} = T^2\times \partial \D^2 = T^3$ under the
basis for $H_1(T^3)$ given by $\partial D^2_{K_1},\partial
\D^2_{K_2},$ and $\partial \D^2_{T^2}$ where $\partial D^2_{S}$ is the
boundary of the normal bundle to $S$. This map then extends over
$T^2\times \D^2$ giving $\rho$ on $S^3$.

\begin{construction}[Artin Spin]
  This construction is originally due to E. Artin in \cite{Artin1925}.
  To each smooth knot $K$ in $S^3$ and let $\widehat{K}$ be the
  corresponding arc in $\D^3$ obtained by removing a small $\D^3$
  surrounding a point of $K$.  We can arrange $\widehat{K}$ so that it
  meets $\partial \D^3$ perpendicularly at the north and south
  poles. Clearly $S^3\setminus \nu K$ and $\D^3\setminus \nu
  \widehat{K}$ are diffeomorphic.

  Using the decomposition $S^4 = \D^2\times S^2 \cup S^1 \times \D^3$,
  we can identify the $4$-sphere with the space $(S^1 \times
  \D^3)/\sim$ where $(\theta_0,x_0)\sim (\theta_1,x_1)$ iff
  $x_0=x_1\in \partial \D^3$.

  Let $K_1, K_2$ be, respectively, the images of $\{1\}\times\partial
  \D^3$ and the $S^1\times \dot{K}$ under $\sim$. Each is a smoothly
  embedded $2$-sphere which intersect each other pairwise at the
  images of the north and south poles of $\partial \D^3$. Thus $K_1$
  and $K_2$ form twins $\Tw_K$. The first sphere, $K_1$, bounds
  $\{1\}\times \D^3$ in $S^4$ and if $\Sigma$ is a Seifert surface for
  $K$, then $K_2$ bounds $S^1\times\Sigma$ union a $2$-handle attached
  along $S^1\times \text{pt}$. The loop $\gamma$, connecting the
  intersection points of the $K_i$, bounds a copy $\Sigma$ in the
  complement of the twin.

  In fact, $S^4 \setminus \nu \Tw_K$ is diffeomorphic to $S^1 \times
  (S^3 \setminus \nu K)$. In the case that $K$ is the unknot, we get
  the standard twins $S^4 \setminus \nu \Tw_K = S^1\times(S^1\times
  \D^2) = T^2\times \D^2$.  See Figure~\ref{fig:stdtwins}.
  
\end{construction} \index{spin!Artin}

\begin{figure}[htbp]
  \centering
  \includegraphics{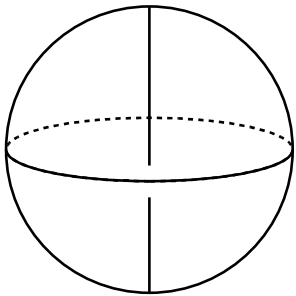}
  \caption[Standard Twins]{Spin to get standard twins}
  \label{fig:stdtwins}
\end{figure}

When the standard twin surgery is performed on twins formed by Artin
spinning $K$, the result is the manifold $S^1\times S^3_0(K)$.  (Where
$S^3_0(K)$ is the result of zero surgery on $K\subset S^3$.) This case
is identical to the knot surgery considered by Fintushel and Stern in
\cite{MR1650308} and so we have
\begin{theorem}[Fintushel, Stern]
  $I(\Tw_K) = \Delta_K(t^2)$ where $\Delta_K$ is the symmetrized
  Alexander polynomial of $K$ and $t=\exp([F])$.
\end{theorem}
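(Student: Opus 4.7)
The plan is to reduce the statement directly to the Fintushel-Stern knot surgery formula. The geometric backbone is already present in the Artin spin construction, namely the diffeomorphism $S^4 \setminus \nu \Tw_K \cong S^1 \times (S^3 \setminus \nu K)$, together with the fact that $\gamma$ bounds a copy of a Seifert surface $\Sigma$ in the complement of the twin.

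First I would analyze $S^4_{\Tw_K}$. Under the Artin identification, the curve $\gamma$ corresponds to $\{*\} \times \lambda_K$, where $\lambda_K$ is the Seifert longitude of $K$, and the normal circle $\partial D^2_{K_2}$ corresponds to $\{*\} \times \mu_K$, where $\mu_K$ is the meridian of $K$, while $\partial D^2_{K_1}$ becomes the $S^1$-factor. The standard twin surgery attaches $T^2 \times \D^2$ so that $\partial \D^2$ maps to $\gamma$; crossed with $S^1$, this is exactly the attachment of a $2$-handle along $\lambda_K$ with zero framing, i.e.\ zero-surgery on $K$. Hence $S^4_{\Tw_K} \cong S^1 \times S^3_0(K)$, and the core torus $T_{\Tw_K}$ is identified with $S^1 \times \mu'_K$, where $\mu'_K$ is the core circle of the filling solid torus.

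Second I would identify the fiber sum. By construction,
\begin{displaymath}
  E(2)_{\Tw_K} \;=\; E(2)\,\fibersum_{F = T_{\Tw_K}}\, S^4_{\Tw_K}
  \;\cong\; E(2)\,\fibersum_{F = S^1\times \mu'_K}\, \bigl(S^1 \times S^3_0(K)\bigr).
\end{displaymath}
Up to a self-diffeomorphism of $F$ extending over $E(2)\setminus \nu F$, this is precisely the knot surgery manifold $E(2)_K$ of Fintushel-Stern \cite{MR1650308}. Their knot surgery theorem then yields $SW(E(2)_K) = SW(E(2)) \cdot \Delta_K(t^2)$ with $t = \exp([F])$, and since $SW(E(2)) = 1$ we conclude $I(\Tw_K) = \Delta_K(t^2)$.

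The main technical obstacle is verifying that the framings match: one must check that the Seifert framing used to lift $\gamma$ into $\partial \nu \Tw_K$ corresponds to the zero-framing of $\lambda_K$ in $S^3$ (needed both to produce $S^3_0(K)$ and to identify the gluing with Fintushel-Stern's). This follows from the fact, recorded in the Artin spin construction, that $\gamma$ bounds a copy of $\Sigma$ in the twin exterior: under the product structure $S^1 \times (S^3\setminus \nu K)$ this Seifert surface is precisely what pins down the zero-framing, so the twin-surgery framing and the knot-surgery framing coincide.
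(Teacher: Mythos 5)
Your proposal is correct and follows essentially the same route as the paper, which simply observes that the standard twin surgery on an Artin-spun twin yields $S^1\times S^3_0(K)$ and that $E(2)_{\Tw_K}$ is then exactly the Fintushel--Stern knot surgery manifold, so their theorem applies. Your additional verification of the framing via the Seifert surface $\Sigma$ bounding $\gamma$ is a detail the paper leaves implicit, but it is the same argument.
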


\begin{construction}[Twist Spin]
  As before, given a knot $K$ in $S^3$ we obtain a knotted arc
  $\widehat{K}$ in $\D^3$ with boundary on the north and south poles.  For
  each $\theta\in S^1=\R/2\pi\Z$ let $\widehat{K}_{n\theta}$ be the image
  of $\widehat{K}$ rotated by $n\theta$ radians about the $z$-axis. The
  annulus $A_n$ in $S^1 \times \D^3$ is obtained by taking the union
  of $\{\theta\}\times \widehat{K}_{n\theta}$. This annulus descends to a
  knotted $S^2$, $K_2$, in $S^4=S^1\times B^3/\sim$.

  Together with $K_1$, the image of $\{\theta\}\times \partial B^3$,
  we form a twin.  We call $K_2$ the $n$-twist spin of $K$ and write
  $K_2=\tau^n K$ and $\Tw_{\tau^n K}$ for the associated twin.

  As with the Artin spin, the first sphere, $K_1$, bounds $\{1\}\times
  \D^3$ in $S^4$. The loop $\gamma$, connecting the intersection
  points of the $K_i$, bounds a copy $\Sigma$ in the complement of the
  twin.

\end{construction} \index{spin!twist} 

This construction is due to Zeeman, who in \cite{MR0195085}, showed
that the $n$-twist spin of a knot was not isotopic to any $2$-knot
obtained by Artin spinning when $n>1$.  The $n=1$ case is interesting
in that the $1$-twist spin, $\tau K$ is unknotted independently of
choice of $K$. However, the twin $\Tw_{\tau K}$ is typically
knotted. This provides an interesting example of twins in which both
$2$-knots are unknots but with the twins being knotted as an
interchangable pair. See \cite{MR487047}.

\begin{construction}[Roll Spin]
  Similar to the twist spin, this construction involves a deformation
  of a knotted arc $\widehat{K}$ in $\D^3$ fixing the north and south
  poles which returns the arc to the starting point.  Take a
  international date line of $\partial \D^3$ union $\widehat{K}$ and push
  it into $\D^3\setminus\widehat{K}$ so that it is null homologous.  Call
  this $\widehat{K}$.  Then consider the $1$-parameter family of
  diffeomorphisms given by pushing a base point $x$ $n$ times along
  $\widehat{K}$.  The return map $\phi$ is then a diffeomorphism of the
  quadruple $(\D^3,\partial \D^3,\widehat{K},x)$ which is the identity on
  all but the first component. Since $\phi$ is isotopic to the
  identity rel $\partial \D^3$, $S^1 \times \D^3$ is diffeomorphic to
  $\R \times D^3/ (+1,\phi)$. Let the annulus $A$ be the image of $\R
  \times \widehat{K}/(+1,\phi)$ in $S^1\times D^3$. 

  Then, as before, $A$ becomes a $2$-knot $K_2$ in $S^4$ after
  quotienting by $\sim$.  We call $K_2$ the $n$-roll spin of $K$ and
  write $K_2=\rho^n K$ and $\Tw_{\rho^n K}$ for the associated twin.
\end{construction} \index{spin!roll}

The roll spinning construction is due to Fox who, in \cite{MR0184221},
showed that, for $\widehat{K}=\widehat{4_1}$, the knotted $2$-sphere $K_2$
coming from the deformed arc is not isotopic to any $n$-twist spun
knot.  In this case, the $1$-roll spin had a corresponding
visualization, duplicated in Figure~\ref{fig:foxroll}, of the motion
of $\widehat{K_1}$ in $\D^3$ which explains why the word ``roll'' was
chosen to describe this construction.

\begin{figure}[phtb]
  \centering
  \includegraphics{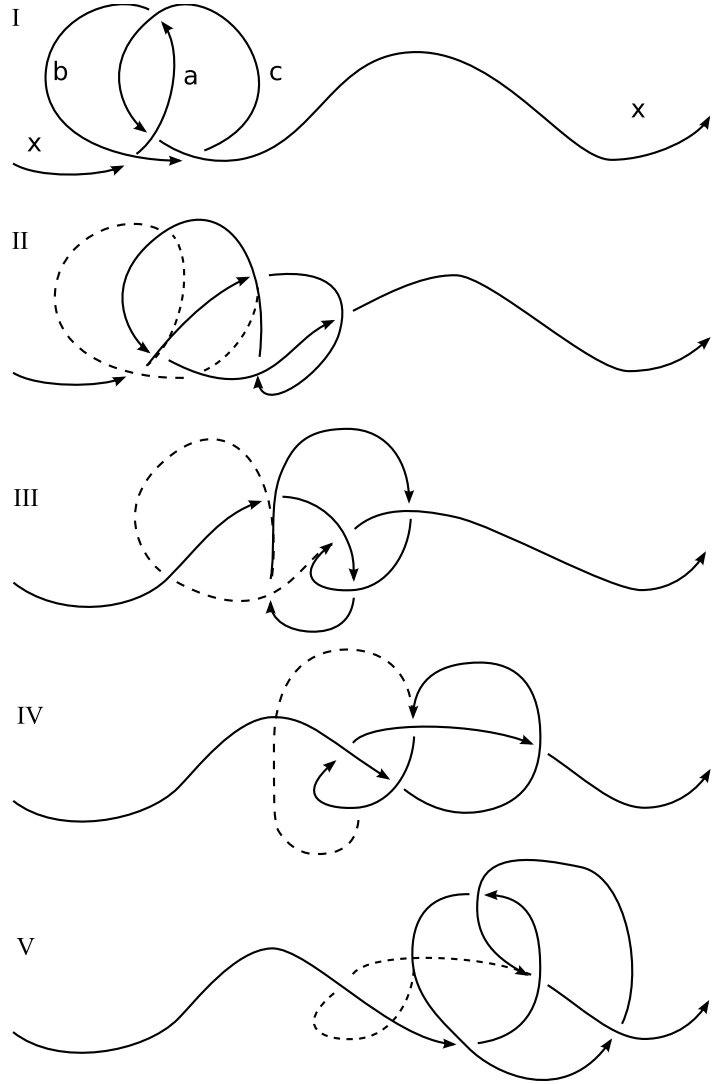}
  \caption{Fox's Roll Spin}
  \label{fig:foxroll}
\end{figure}

Note that both twist and roll spinning can be described in terms of
certain diffeomorphisms of $\D^3$ which keep $\partial \D^3$ and
$\widehat{K}$ fixed identically.  With this in mind, we now consider their
mutual generalization, Deform spinning:
\begin{construction}[Deform Spin]
  Let $\phi$ be a self diffeomorphism of $\D^3$ keeping $\partial
  \D^3, \widehat{K}$ and a base point $p$ fixed identically.  Then the
  mapping torus of $\phi$ is $S^1\times \D^3\cong (\R\times
  \D^3)/\left<(r,x)\sim(r+1,\phi(x))\right>$ with an embedded annulus
  $\widehat{K_2}$ which is the quotient of $\R\times \widehat{K}$.
  Then after quotienting by the identification $(\theta_0,x)\sim
  (\theta_1,x)$ for $x \in \partial \D^3$ as before, we obtain a
  knotted $2$-sphere $K_2$, the image of $\widehat{K_2}$, which
  together with $K_1$, the image of $\{\theta\}\times \partial \D^3$,
  forms a twin.  We write $K_2=\phi K$ and $\Tw_{\phi K}$ for the twin
  pair.  The isotopy class of $K_2$ and of $\Tw_{\phi K}$ is
  determined solely by the isotopy class of $\phi$.
\end{construction} \index{spin!deform}

This construction was introduced by Litherland in \cite{MR530058}.
Diffeomorphisms such as $\phi$ are called ``deformations'' and form a
group $\mathcal{D}(K)$ of deformations modulo isotopy.
$\mathcal{D}(K)$ is isomorphic to the group
$\Aut_{\partial}(\pi_{1}(S^3\setminus K))$ of automorphisms of
$\pi_{1}(S^3\setminus K)$ preserving the fixed peripheral subgroup
given by the image $\pi_1(\partial \nu K) \to \pi_1(S^3\setminus \nu
K)$.  In this setup we find that $\tau$ corresponds to conjugation by
the meridian of $K_1$ and $\rho$ corresponds to conjugation by the
longitude of $K_1$.  Then
\begin{lemma}[Litherland]
  If $K_1$
  \begin{itemize}
  \item is not a torus knot, $\Z_{\tau}\oplus \Z_{\rho} \subset
    \mathcal{D}(K_1)$.
  \item is a $p,q$-torus knot, $\tau^{pq}\rho=\id$ and
    $\mathcal{D}(K_1)\cong\Z_{\tau}$.
  \end{itemize}
\end{lemma}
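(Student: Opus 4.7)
My approach is to work via Litherland's isomorphism $\mathcal{D}(K_1) \cong \Aut_\partial(\pi_1(S^3\setminus K_1))$ already invoked in the paper. Under this identification, $\tau$ corresponds to conjugation by the meridian $m$ and $\rho$ to conjugation by the preferred longitude $\ell$. Since $m$ and $\ell$ commute in the peripheral $\Z\oplus\Z$ subgroup, the induced inner automorphisms commute, so the map $(a,b) \mapsto \tau^a\rho^b$ factors as a homomorphism $\Z\oplus\Z \to \mathcal{D}(K_1)$ whose kernel is exactly the intersection of the peripheral subgroup with the center of $\pi_1(S^3\setminus K_1)$.

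For the non-torus case I would invoke the classical theorem (Burde--Zieschang) that the center of a knot group is trivial unless the knot is a torus knot. This triviality forces the map above to be injective, yielding the desired inclusion $\Z_\tau \oplus \Z_\rho \subset \mathcal{D}(K_1)$.

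For the $(p,q)$-torus knot case, starting from the presentation $\pi_1(S^3\setminus K_1)=\langle a,b\mid a^p=b^q\rangle$, the center is infinite cyclic, generated by $z=a^p=b^q$. A direct computation expressing $m$ and $\ell$ in terms of $a,b$ shows that $z=\ell m^{pq}$ up to orientation conventions, and since conjugation by a central element is trivial, this yields the relation $\tau^{pq}\rho=\id$. To complete the identification $\mathcal{D}(K_1)\cong\Z_\tau$, I would appeal to Waldhausen's rigidity for Haken manifolds, together with the unique Seifert fibered structure on the torus knot complement, to show that every peripheral-preserving automorphism is inner and comes from the peripheral subgroup, so $\mathcal{D}(K_1)$ collapses to $\langle \tau,\rho\mid \tau^{pq}\rho\rangle \cong \Z_\tau$.

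The main obstacle is that last step for torus knots: one must control all of $\Aut_\partial$ rather than just its inner part. The relation $\tau^{pq}\rho=\id$ and the injectivity argument for non-torus knots both follow routinely from the structure of the knot group, but ruling out additional peripheral-preserving outer automorphisms requires a careful appeal to the $3$-manifold topology of Seifert fibered spaces and the naturality of the Seifert invariants under self-diffeomorphisms fixing the boundary identically.
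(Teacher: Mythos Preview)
The paper does not supply its own proof of this lemma: it is simply quoted from Litherland~\cite{MR530058} and used as background. So there is no in-paper argument to compare against.

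That said, your sketch is the standard line of reasoning and is essentially how the result is established. A couple of remarks. First, your formulation of the non-torus case is clean and correct: the kernel of $(a,b)\mapsto \tau^a\rho^b$ is precisely $\langle m,\ell\rangle \cap Z(\pi_1)$, and for non-torus knots the center is trivial (this does not even require geometrization---the Burde--Zieschang argument via the Seifert fibered structure suffices). Second, for the torus knot case your identification of the relation $\tau^{pq}\rho=\id$ via $z=\ell m^{pq}$ is right. For the remaining step---that $\Aut_\partial$ is no larger than $\langle\tau,\rho\rangle$---you correctly flag this as the subtle point. One can argue it purely algebraically: the torus knot group $\langle a,b\mid a^p=b^q\rangle$ has $\mathrm{Out}$ generated by the involution $a\mapsto a^{-1},\,b\mapsto b^{-1}$, which inverts $m$ and hence does not fix the peripheral subgroup; thus every element of $\Aut_\partial$ is inner, say conjugation by $g$, and then $g$ must centralize the peripheral subgroup, which for a torus knot forces $g\in\langle m,\ell\rangle$. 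This avoids the heavier appeal to Waldhausen that you propose, though that route also works.
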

In the case that $K$ is a composite knot, $K= \#_{i \in I} K_i$, the
deformation group $D(K)$ may be much larger than the subgroup
generated by $\rho,\tau$ and includes a copy of the pure braid group
on $|I|$ strands; see \cite{MR0461567}. 

\subsection{Ribbon Knots and Twins}\label{sec:ribbon}

We say that a 2-knot $K$ is ribbon if it is formed by the following
construction: \index{ribbon!knot} Let $D=\disunion \D^3$ (bases),
$B=\disunion \D^2\times I$ (bands) each be embedded in $\R^4$ with
$(\partial D)\cap B = \disunion \D^2\times \pm1$. If a band intersects a
base elsewhere, 
\begin{displaymath}
  \left(\D^2\times (-1,1)\right)\cap \D^3 = \D^2\times t, \quad t\in (-1,1)  
\end{displaymath}
and
\begin{displaymath}
  \left(\D^2\times(-1,1)\right)\cap (\partial \D^3)=\emptyset. 
\end{displaymath}
The second type of intersection is called a ribbon intersection of $K$
or ribbon singularity of $D\union B$.  Then
\begin{displaymath}
  K=(\partial D\setminus 
  \underbrace{\disunion \D^2 \times \pm1}_{\text{in } \partial B}) 
  \union
  (\disunion (\partial \D^2)\times I)    
\end{displaymath}
is a ribbon knot with ribbon presentation given by $D\union B$.  We can
define ribbon surfaces of arbitrary genus in the same manner.

Suppose that we have a twin $\Tw=K_1\union K_2$ for which the $K_i$ are
ribbon.  Then for each $K_i$ we have a set of bases $D_i$ and bands
$B_i$.  We will say that $\Tw$ is ribbon if
\begin{enumerate}
\item $B_1\cap B_2 = \emptyset$

\item $D_1\cap B_2 = \disunion \D^2 \times t$, $t\in(-1,1)$ with
  $(\partial D_1) \cap D^2\times (-1,1) = \emptyset$ for each band in
  $B_2$ (ribbon intersection)

\item $D_2\cap B_1 = \disunion \D^2 \times t$, $t\in(-1,1)$ with
  $(\partial D_2) \cap D^2\times (-1,1) = \emptyset$ for each band in
  $B_1$ (ribbon intersection)

\item $D_1 \cap D_2 = 2\D^2$.  More specifically, $D_1$ and $D_2$ meet
  only in two balls $D'_i,D''_i$ from each and at these intersections
  (corresponding to the intersection points $K_1\cap K_2$), we have
  the following local model:
  \begin{displaymath}
    D'_1=\left\{ (z_1,z_2)\in \C^2 \mid \text{Re}( z_2 ) \leq 1,
      \text{Im}( z_2 ) =0 \right\}\subset \C^2,
  \end{displaymath}
  \begin{displaymath}
    D'_2=\left\{ (z_1,z_2)\in \C^2 \mid \text{Re}( z_1 ) \leq 1,
      \text{Im}( z_1 ) =0 \right\}\subset \C^2,
  \end{displaymath}
  So that taking the boundary of each gives us the cone of the
  positive Hopf link.  The $D''_1,D''_2$ case is the same but with
  orientations reversed on $D''_2$, giving us a negative Hopf link
  cone boundary.

\end{enumerate}
In this case we say that $\Tw$ is a \emph{ribbon
  twin}.  \index{ribbon!twin} \index{twin!ribbon}

It is known that of the deform spun knots, only the Artin and 1-twist
spun knots are ribbon. Hence, at most Artin and 1-twist spun twins may
be ribbon. Artin spun twins are known to be ribbon but it is not known
to the author if 1-twist spun twins are ribbon.

Two ribbon presentations are {\em stably equivalent} if they are
equivalent under the following operations. Addition of a trivial
base/band pair, sliding the disc to which a band attaches (band
slide), and moving a ribbon intersection along a base/band sequence
(band pass) are shown in Figure~\ref{fig:band-stabilization} and
together with isotopy generate {\em stable equivalence } of ribbon
presentations.  Clearly stable equivalence of ribbon presentations
generates isotopies of the corresponding ribbon knot but the converse
also holds --- isotopic ribbon knots have stably equivalent ribbon
presentations. For a proof of this, see \cite{MR1180400}.

\begin{figure}[htbp]
  \centering
  \includegraphics{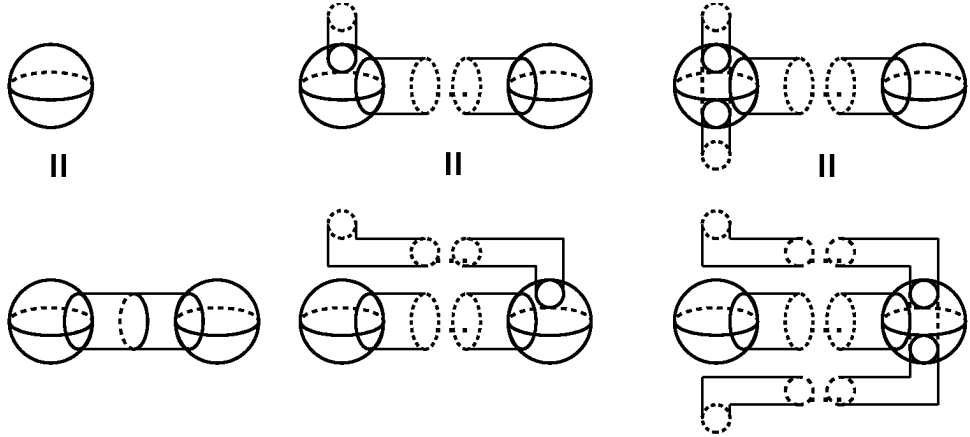}
  \caption[Trivial addition/deletion, Band slide, and Band pass]{A)
    Trivial addition/deletion B) Band slide C) Band pass}
  \label{fig:band-stabilization}
\end{figure}

It will occasionally be easier to deal with simplified ribbon
presentations.  Let $\Gamma=\Gamma(D,B)$ be the graph which has
vertices corresponding to bases and edges given by bands, connected in
the natural way.  It is clear that $b_1(\Gamma)$ is the genus of the
ribbon surface specified by the ribbon presentation.  We restrict
ourselves to the case where the ribbon surface is a sphere or torus.
Suppose that $\Gamma$ has a vertex $x$ of valence $3$ or greater.
Then one of the outgoing edges of $x$ has a path which ends at a
vertex $y$ with a single incoming edge.  Perform the band slide
corresponding to this path to get a new ribbon presentation $\Gamma'$
with the same set of bases.  In $\Gamma'$, the valence of $x$ has
decreased by $1$ and the valence of $y$ is now $2$.  Continue this
procedure until we arrive at a graph $\widehat{\Gamma}$ (and
corresponding ribbon presentation) for which each vertex has valence
at most $2$.  Then, as cell complexes, $\widehat{\Gamma}$ is either an
interval or a circle as the ribbon surface is a sphere or a torus.  We
will call such ribbon presentations {\em linear}.\index{ribbon
  presentation!linear}

Consider the connect sum of a ribbon $2$-knot $K_0$ with standard
twins $\Tw=K_1\union K_2$.  Standard twins have a simple ribbon
presentation given by two bases and a band each.  (Each base is for
one of the twin intersection points.) Stabilize the band in $K_1$ by
switching it for two bands and a base.  Then the connect sum
$K_0\fibersum K_1$ is formed by adding a band from the new base of
$K_0$ to any of the terminal bases in a ribbon presentation of
$K_1$. \index{ribbon presentation!of connected sum}

\subsection{Projections} \label{subsec:projection}

In the study of knots in $\R^3$, a generic projections to $\R^2$,
together with crossing information, completely determines the isotopy
type of a knot.  Similarly, there is a theory for decorated
projections for twins and surfaces in $\R^4=S^4\setminus \text{pt}$
which determines their embedding up to isotopy.

Giller proves, in \cite{MR674227}, that if $\Sigma$ is a surface in
$\R^4$, then up to isotopy $\Sigma$ admits a projection to $\R^3$ with
only double and triple points exist. Further, such projections are
generic.  In these generic projections, the double points either exist
in families which are either simple closed curves or embedded open
intervals whose closed endpoints are triple points.  See
Figure~\ref{fig:dbltrpl-pt}.  In the same paper, he gives methods of
decorating these projections with over -- middle -- under crossing
information and a way of determining if an arbitrary set of crossing
information gives a lift of such an immersion of a surface in $\R^3$
to an embedding in $\R^4$.

\begin{figure}[htbp]
  \centering
  \includegraphics{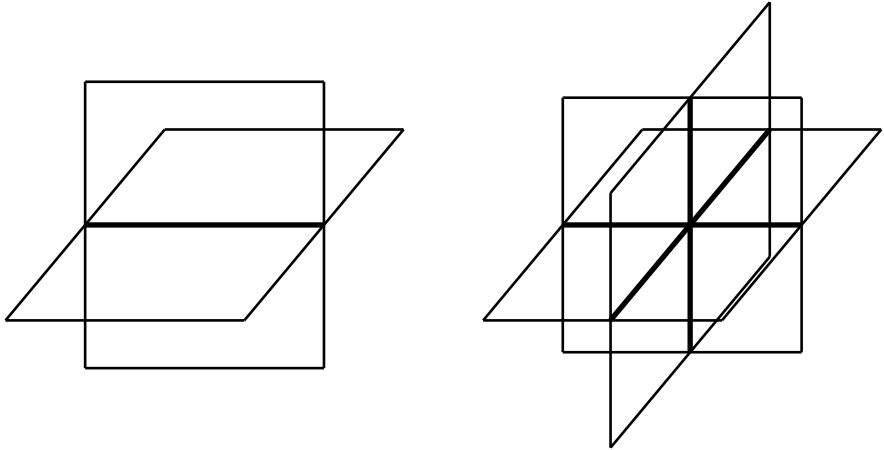}
  \caption{Local models for a family of double points and a triple point}
  \label{fig:dbltrpl-pt}
\end{figure}

We will only consider those knots and twins which admit a projection
which contain no triple points.  Not all twins or surfaces have such a
projection and those that do are said to be {\em simply knotted}.
First examples of simply knotted $2$-knots include Artin spun knots
and ribbon $2$-knots.  For Artin spun knots,
we can get a projection with no triple points by doing the same
spinning construction (one dimension down) to the projection to $\R^2$
of the original, classical knot.  This creates an $S^1$s worth of
double points for each crossing in the classical knot's projection.
We will call a twin $\Tw=K_1\union K_2$ simply knotted if both $K_i$
are simply knotted and pairwise have no triple points.

Ribbon knots have embedded projections away from the ribbon
singularities --- the intersections of the interiors of bands with
interiors of the bases.  (This is in contrast with ribbon $1$-knots,
for which projections of non-intersecting bands may have crossings.
The analogous situation for $2$-knots is an under/over crossing of the
whole band --- which does not result in a crossing in the projection
to $\R^3$.)  Nearby the ribbon singularities, we have projections
which appear as in Figure~\ref{fig:ribbon-sing}.  It was proved in
\cite{MR0172280} that all simply knotted $S^2$s are ribbon.

\begin{figure}[htbp]
  \centering
  \includegraphics{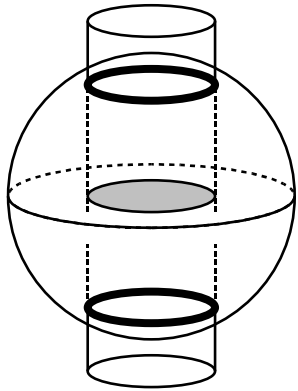}
  \caption[Projection of a ribbon singularity and corresponding double
  points.]{A projection of the neighborhood of a ribbon singularity
    (shaded disc) and the corresponding two $S^1$s of double points.}
  \label{fig:ribbon-sing}
\end{figure}

When we have a $S^1$ family of double points, we have local
neighborhoods around each which appear as in the first picture in
Figure~\ref{fig:dbltrpl-pt}.  This gives the neighborhood of the
family the structure of an bundle over $S^1$.  As the surfaces in
$\R^4$ are orientable and the two preimages of the double points are
separated, the monodromy must be trivial.  This means that, local to
the $S^1$ family of double points, the projection is that of a
classical knot crossing times $S^1$.

Then, for a simply knotted projection of a (oriented) surface in
$\R^4$, it is sufficient to label one of the surfaces as being
over crossing at each family of double points.  We will use ``+'' to
denote this.

\begin{figure}[htbp]
  \centering
  \includegraphics{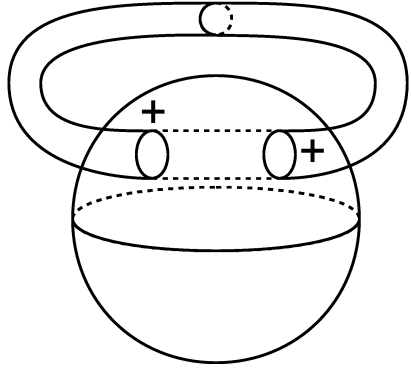}
  \caption{A sphere and a torus with crossing information.}
  \label{fig:crossing-info}
\end{figure}

\begin{figure}[htbp]
  \centering
  \includegraphics{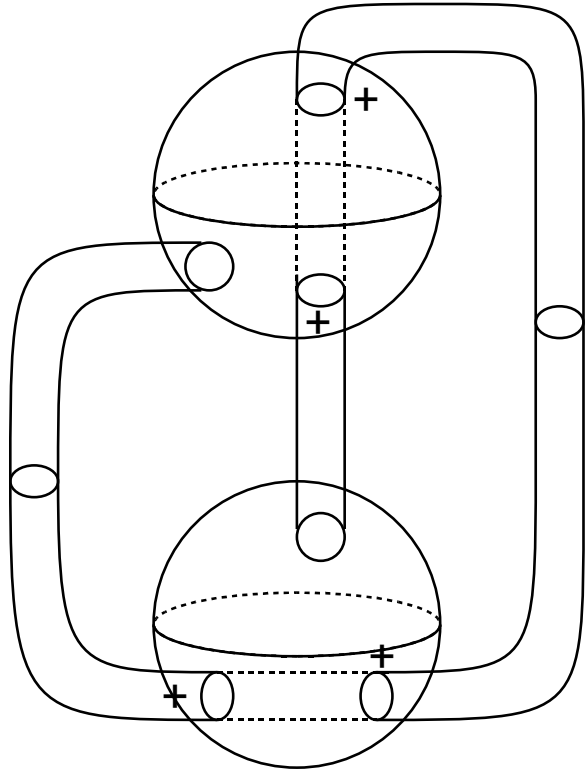}  
  \caption[A $2$-knot with $\Delta_A=1-2t$ and $\Delta_G=t^{-2} -1 +
  t^2$]{A $2$-knot with Alexander polynomial $1-2t$ and Giller
    polynomial $\Delta_G=t^{-2} -1 + t^2$}
  \label{fig:giller-ex}
\end{figure}

For a twin, a few additional pieces of information are needed.  We
need to keep track of the two intersection points of the spheres.  In
$S^4$, the neighborhood of each is diffeomorphic to the cone on a
positive or negative Hopf link.  Then the (undecorated) projection of
such a neighborhood appears as does a neighborhood of double points.
We decorate the projection with a solid dot to indicate the
intersection point of the $S^2$s and $+$ signs to indicate over/under
crossings on the double point arcs which emanate from it.  We switch
from over to under at the intersection of the spheres in twins.  See
Figure~\ref{fig:twin-int}

\begin{figure}[htbp]
  \centering
  \includegraphics{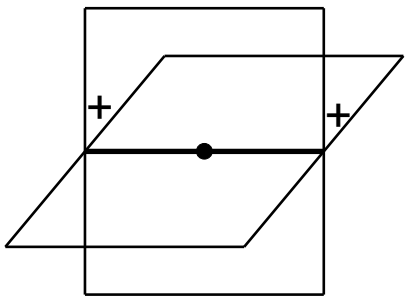}
  \caption[Neighborhood of the intersection of spheres in a twin]{The
    projection of the neighborhood of the intersection of spheres in a
    twin.  To the left, the vertical plane crosses above and to the
    right, the horizontal does.}
  \label{fig:twin-int}
\end{figure}

\subsection{Virtual knot presentation}\label{subsec:virtual-knot-diag}

In \cite{MR1758871}, Satoh showed how to represent ribbon surfaces of
genus $0$ and $1$ in $\R^4$ by means of virtual knots/links.  For our
purposes, a virtual knot \index{knot!virtual} (or link) is a diagram
in $\R^2$ of embedded, oriented arcs which end either at ``crossings''
as in the (top) first two pictures in Figure~\ref{fig:virtual-knot} or
at endpoints as in the (top) third picture.  Each such diagram
corresponds to a collection of immersed surfaces in $\R^3$ by
replacing each of the crossings and endpoints in
Figure~\ref{fig:virtual-knot} with the corresponding surfaces in
$\R^3$ shown.  These are then connected via tubes parallel to the
embedded arcs.  Thus, any virtual link corresponds to the projection,
with crossing information, of a collection of ribbon $2$-spheres and
tori in $\R^4$.

\begin{figure}[htbp]
  \centering
  \includegraphics{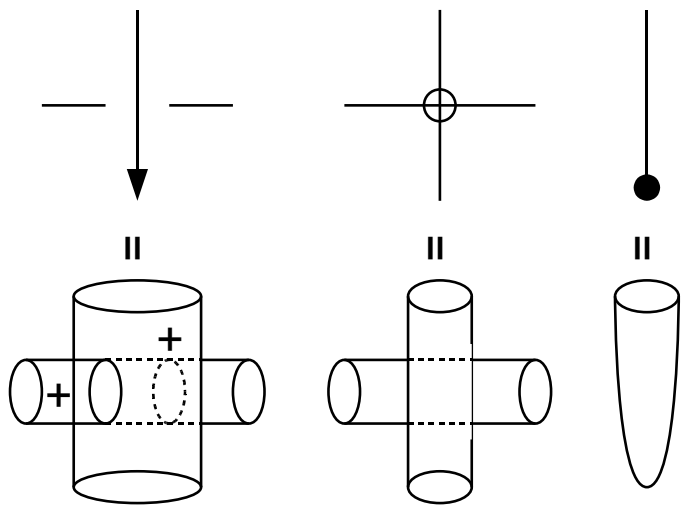}
  \caption[Crossings in virtual knots versus crossings in
  projections]{Correspondence of crossings in virtual knots to
    crossings in projections of surfaces to $\R^3$}
  \label{fig:virtual-knot}
\end{figure} 

Conversely, linear ribbon presentations of knots correspond to virtual
knots.  Take a projection of $K\subset \R^4\to \R^3$ having only
double points at ribbon singularities.  For each band in the linear
ribbon presentation, consider the image of its core in $\R^3$ extended
to the center of the bases to which the band attaches.  This gives an
immersed (at ribbon singularities) arc $\widehat{K}$ in $\R^3$.  Taking a
generic projection $\R^3\to \R^2$ we get an arc $\widecheck{K}$ immersed
in $\R^2$ with two kinds of singularities:
\begin{itemize}
\item double points of the projection $\widehat{K}\subset \R^3 \to \R^2
  \supset \widecheck{K}$ and
\item projections of immersion points $\widehat{K}\into \R^3$.
\end{itemize}
Each of the first kind of double point corresponds to a virtual
crossing.  For the second kind of crossing we must first consider a
diversion about orientations.

The endpoints of $\widecheck{K}$ each correspond to a base with only one
band attached --- here, $K$ locally consists of the discs $D_1,D_2$.
With a fixed orientation on $K$, we orient the boundary of the $D_i$
with the outward normal.  We then say that the endpoint of $\widecheck{K}$
is out/in as the boundary orientation on the $D_i$ is counterclockwise
or clockwise, respectively (when $D_i$ is orientation-preserving
identified with the unit complex disc.) This orients $\widecheck{K}$.

Then, with $\widecheck{K}$ oriented, we can check that the second type of
immersion point corresponds to the ribbon intersection in
Figure~\ref{fig:virtual-knot}.  If it does not (i.e.  the two crossings
have the opposite under/over information) then perform the isotopy in
Figure~\ref{fig:vk-fix}.  Once this has been done, we may use our
correspondence from Figure~\ref{fig:virtual-knot} to label each of the
immersion points of $\widecheck{K}$ as virtual knot crossings.

\begin{figure}[htbp]
  \centering
  \includegraphics{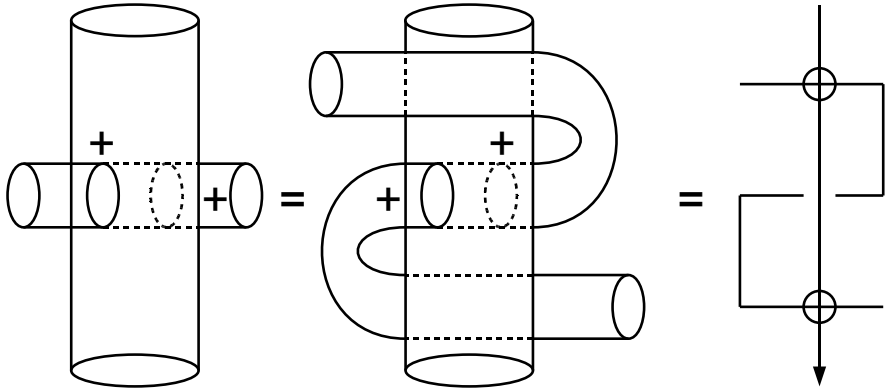}
  \caption[Fixing a ``bad'' ribbon crossing by isotopy]{Fixing a
    ``bad'' ribbon crossing by an isotopy which creates two virtual
    crossings}
  \label{fig:vk-fix}
\end{figure}

In addition to the ``classical'' Reidemeister moves in
Figure~\ref{fig:classical-reidemeister}, associated to a virtual knot,
we have the series of ``virtual'' Reidemeister moves in
Figure~\ref{fig:virt-reidemeister} giving allowable isotopies.  Notice
that move D is one of the forbidden moves of the virtual knots of
Kauffman.  The type of virtual knot we consider here is sometimes
referred to a being {\em weakly} virtual but in the spirit of brevity
we will omit ``weakly'' in this paper.  These concepts of virtual
knots are inequivalent as there are virtual knots, in the sense of
Kauffman, which are knotted and which, when move D is allowed, are
unknotted.  Such an example is given in \cite{MR1758871}.

\begin{figure}[htbp]
  \centering
  \includegraphics{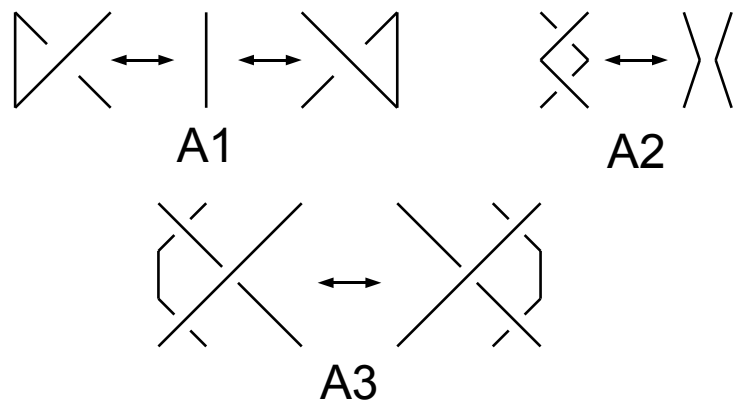}
  \caption{Reidemeister Moves for Classical knots}
  \label{fig:classical-reidemeister}
\end{figure} \index{Reidemeister move!classical}

\begin{figure}[htbp]
  \centering
  \includegraphics{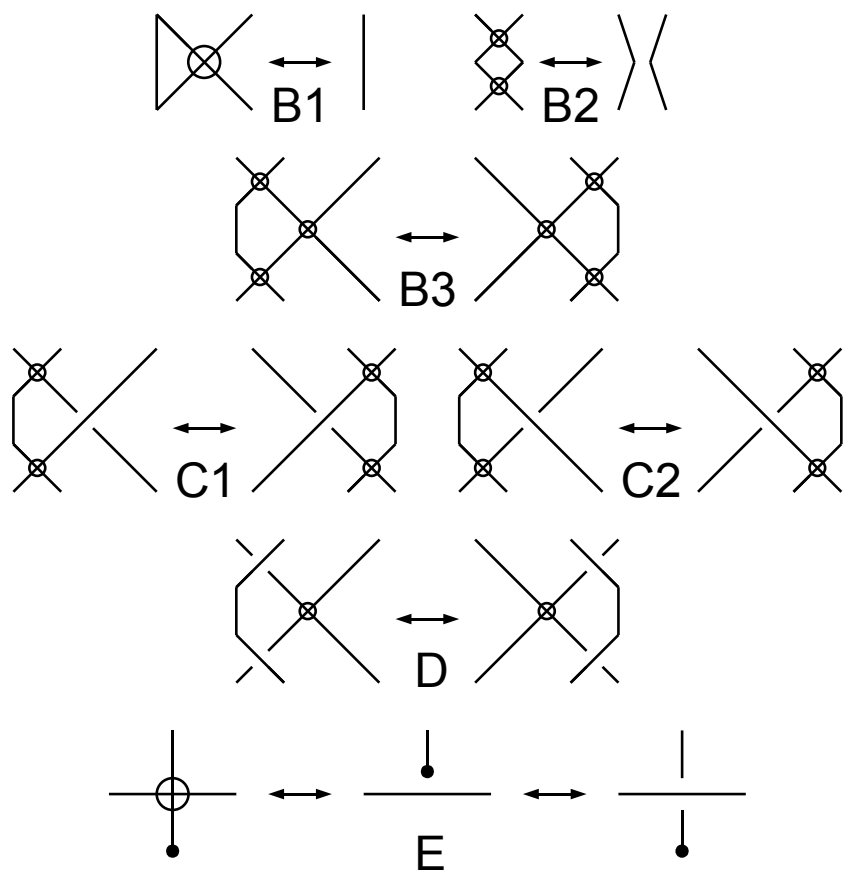}
  \caption{``Reidemeister'' Moves for Virtual knots}
  \label{fig:virt-reidemeister}
\end{figure} \index{Reidemeister move!virtual}

We will add additonal markings to describe ribbon twins.  For a ribbon
twin $\Tw=K_1\union K_2$, there are two bases $D'_i,D''_i$ in the
ribbon representation of each $K_i$ which correspond to the twin
intersection points $K_1\cap K_2$.  Perform band slides until the
ribbon presentations of the $K_i$ are linear with endpoints
$D'_i,D''_i$.  Then we have corresponding virtual knot representations
of the $K_i$ with identical endpoints.  We will use $\oplus,
\ominus$ to mark each of these as they correspond to the cone on the
positive and negative Hopf bands at the intersection points.  With
this in mind, we get the moves in
Figure~\ref{fig:reidemeister-move-f}.

\begin{figure}[htbp]
  \centering
  \includegraphics{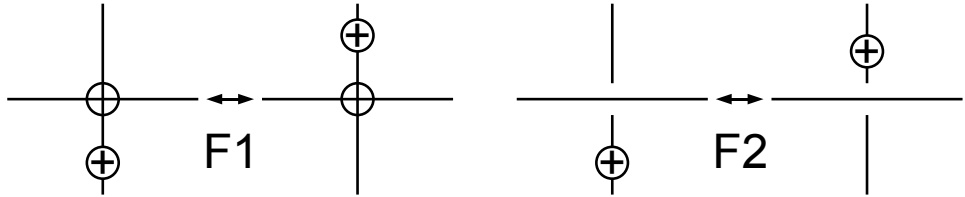}
  \caption[``Reidemeister'' Move F for Twins]{``Reidemeister'' Moves
    $F1$,$F2$ for Twins, versions for $\ominus$ are identical}
  \label{fig:reidemeister-move-f}
\end{figure} \index{Reidemeister move!twin}

Now let us consider the connect sum of a ribbon $2$-knot $K_0$ with
standard twins $\Tw=K_1\union K_2$ as described in
Section~\ref{sec:ribbon}.  Recall that, in this construction, we
formed the connect sum by adding a single band between bases of $K_1$
and $K_0$. We can assume that $K_1$ has a ribbon presentation with $3$
bases, $D_+,D_0,D_-$ with the positive/negative twin intersections
occuring at $D_+$ and $D_-$ respectively. This ribbon presentation has
$2$ bands, $B_+$ from $D_+$ to $D_0$ and $B_-$ from $D_0$ to $D_-$.
Assume that $K_0$ is given by a virtual knot diagram and hence that it
has a linear ribbon decomposition. Let $B_0$ be a band connecting the
base $D_0$ on $K_1$ and endpoints of the linear ribbon decomposition
of $K_0$. Then $K_1\# K_0$ has a ribbon decomposition in the shape of
a $T$, with the top bar cosisting of the bands $B_\pm$ and the shaft
consiting of $B_0$ and the ribbon decomposition for $K_0$.

To get a virtual knot diagram for the twin $\Tw\# K_0$, we will need
to perform a series of band slides -- sliding one of $B_\pm$ along the
ribbon decomposition of $K_0$ to the other endpoint. This is
straightforward and results in a ribbon decomposition whose virtual
knot diagram can be obtained from that of $K_0$ by
\begin{enumerate}
\item replace each strand of the virtual knot for $K_0$ with two parallel strands,
\item at the two endpoints of the virtual knot, replace the endpoint
  with one of diagrams show in the bottom of the first two columns of
  Figure~\ref{fig:conn-sum}, using both,
\item replace crossings for the virtual knot for $K_0$ with
  configurations as shown in Figure~\ref{fig:conn-sum}.
\end{enumerate}
An example is shown in Figure~\ref{fig:giller-twin0}.

\begin{figure}
  \centering
  \includegraphics{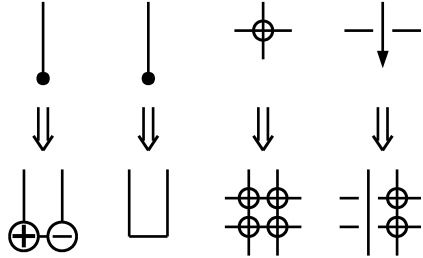}
  \caption{Ribbon knot to connect sum ribbon twin}
  \label{fig:conn-sum}
\end{figure}

\begin{figure}[htbp]
  \centering
  \includegraphics{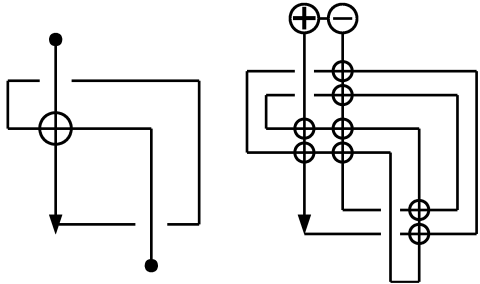}
  \caption{Giller's example (left) and its connect sum with standard twins (right)}
  \label{fig:giller-twin0}
\end{figure}

Another technique for obtaining a Ribbon twin from a ribbon $2$-knot
$K_1$ is to take a virtual knot presentation for $K_1$ and perform
virtual Reidemesiter moves so that the endpoint-bases sit in the
unbounded region of the plane. Then connect these using an
crossingless arc sitting in the unbounded face. This gives the second,
unknotted, $2$-sphere $K_2$ of the twin. For example, see
Figure~\ref{fig:giller-twin}

\begin{figure}[htbp]
  \centering
  \includegraphics{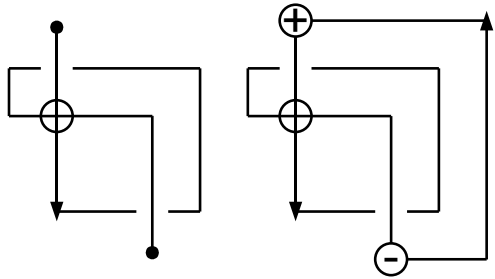}
  \caption[Giller's example and twin version]{Giller's example (left) and twin
    containing it (right)}
  \label{fig:giller-twin}
\end{figure}

\subsection{Surgery diagrams} \label{subsec:surgery-diagram}

As discussed earlier, a twin in $S^4$ has a canonical surgery
associated to it.  Since our decorated projections determine isotopy
type, no additional information is needed to carry out surgery.  For a
$T^2$ in $S^4$, however, we will need additional information.

As any $T^2\subset S^4$ is nullhomologous, it bounds a Seifert
manifold which, via its inward normal, gives a Seifert framing for
the $T^2$.  This gives us a decomposition, $\partial \nu T^2=T^2\times
S^1$.  As surgery replacing $\nu T^2$ with $T^2\times \D^2$ is
determined by the image of $\partial \D^2$, we see that we can
entirely describe surgery by specifying a curve on $T^2$ and an
integer giving the winding about a meridian (boundary of normal disc)
to $T^2$.  See Figure~\ref{fig:surgery-info}.

When the $T^2$ is ribbon with a linear ribbon presentation and
corresponding virtual knot diagram, we can decompose $T^2$ in the
following manner: Let $C$ be the core of the ribbon presentation,
projected to $\R^3$.  Let $\alpha$ be an essential loop on $T^2$ which,
when projected to $\R^3$ is null-homologous in $\R^3\setminus C$.  Any
such loop represents the same homology class on $T^2$.  Let $\beta$ be
$\partial \D^2\times \{t\}$ in a band in the ribbon
presentation.  Orient $\alpha$ to coincide with the orientation of the
virtual knot diagram.  Then orient $\beta$ so that $\alpha\cdot \beta
=+1$ with respect to the orientation on $T^2$.  So $T^2\cong
\alpha\times \beta$.  Then, in the virtual knot diagram, labeling the
knot corresponding to $T^2$ with $(\gamma,\beta/\alpha)$ where
$\gamma$ is the winding number of the attached $\partial \D^2$ with
respect to the Seifert framing and $\beta/\alpha$ is the slope of
$\partial \D^2$ projected to $T^2$.

\begin{figure}[htbp]
  \centering
  \includegraphics{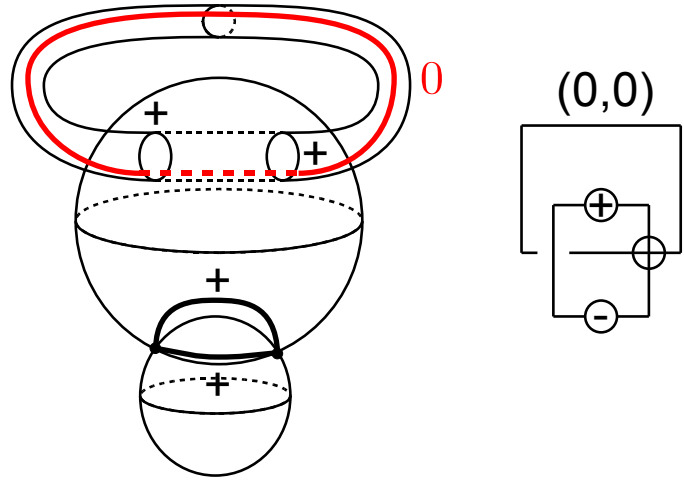}
  \caption{Projection and Virtual Knot surgery diagrams for a twin and
    torus.}
  \label{fig:surgery-info}
\end{figure}

We will write an $S$ together with $*$s on the appropriate components
when we wish to denote this surgery.

\subsection{Giller's Polynomial}\label{sec:giller}

In \cite{MR674227}, C.  Giller defines a polynomial $\Delta_G(t)$ of
simply knotted $S^2$s in $S^4$.  This supposed invariant obeys a skein
relation similar to that of the Alexander polynomial for classical
knots.

That is, consider a embedded circle of double points in a projection
of a (collection of) oriented sphere or torus in $\R^4$.  As mentioned
before, we can trivialize the neighborhood of the double points so
that we have the neighborhood of a classical knot crossing times
$S^1$.  All surfaces in question are oriented and so orient the double
points of their projection --- this orients both strands in the
classical picture.  We can then replace this neighborhood with $S^1$
times any of the $3$ options in Figure~\ref{fig:resolution}, obtaining

\begin{figure}[htbp]
  \centering
  \includegraphics{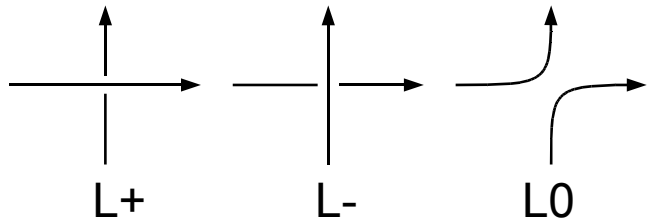}
  \caption{Resolution of a knot crossing}
  \label{fig:resolution}
\end{figure}

The invariant is then defined by the relation:
\begin{equation}
  \label{eq:giller-conway-relation}
  \Delta_G(L_+)-\Delta_G(L_-) = (t^{1/2}-t^{-1/2})\Delta_G(L_0)  
\end{equation}
together with
\begin{equation}
  \label{eq:giller-unknot-relation}
  \Delta_G(\text{unknotted sphere})=1
\end{equation}
and
\begin{equation}
  \label{eq:giller-split-relation}
  \Delta_G(\text{surfaces separated by an }S^3)=0. 
\end{equation}

Giller also describes $\Delta_G$ in a manner similar to that of the
Alexander polynomial. That is, letting $M$ be a Seifert manifold for
$K$, he forms the infinte cyclic cover $X$ of $S^4\setminus K$ and
presents $H_1(X)$ as a $\Q[t,t^{-1}]$ module. Then $\Delta_G$ is
defined by $T=\Q[t,t^{-1}]/(\Delta_G)$ where $T$ is the $\Q[t,t^{-1}]$
torsion part of $H_1(X)$.

Whenever $K$ is ribbon, we can choose $M$ to be a punctured
$nS^1\times S^2$ given by the ribbon presentation. It is easy to
verify by standard arguments that isotopies and band-stabilizations of
the ribbon presentation yeild the same $\Delta_G$. Therefore,
$\Delta_G$ is well-defined for ribbon knots.

For Artin spun knots, Giller's polynomial is the Alexander
polynomial.  In the case that we apply these computations to the
projection of the knotted sphere in an Artin spun twin, Giller's
polynomial is the Seiberg-Witten polynomial.  (as shown in
\cite{MR1650308}).

Interestingly, $2$-knots and twins need not have a symmetric Alexander
polynomial.  Giller's polynomial and the Seiberg-Witten polynomial,
however, are symmetric.  For example, see Figure~\ref{fig:giller-ex},
which is the spun right hand trefoil with crossing changes.

The natural questions to ask are then: Is Giller's polynomial an
invariant of $2$-knots? If so, is it equal to the Seiberg-Witten
polynomial for the corresponding twin? For twins, what is the
relationship between the Alexander polynomial and the Seiberg-Witten
invariant? Our invariant provides suggestive evidence that the second
question, at least in some cases, should be answered in the affirmative.

\section{The $4$-dimensional Conway moves}

\subsection{$3$-dimensional Hoste Move}

The main theorem of Fintushel and Stern in \cite{MR1650308} gives a
way of computing the Seiberg-Witten Invariants of classical-knot
surgered $4$-manifolds in terms of the symmetrized Alexander
polynomial of the knot.  The proof relies on a technique J.  Hoste
developed in \cite{MR743990} which is a method for obtaining Kirby
calculus diagrams for so called ``sewn-up $r$-link exteriors'' in
$S^3$.  (Like Fintushel and Stern, we will only consider the case
where $r$-links are actually knots and links.) We discuss a simplified
but sufficient version of the original move below so to demonstrate
the ideas involved.

\begin{figure}[htbp]
  \centering 
  \includegraphics{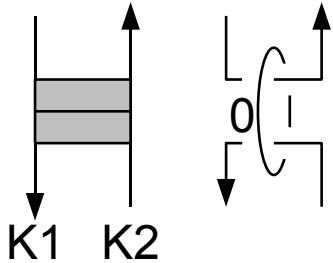}
  \caption{$3$-dimensional Hoste move}
  \label{fig:hoste2}
\end{figure}

A sewn up knot exterior is formed by taking either two oriented knots
in one copy of $S^3$ or in two separate copies, excising a normal
neighborhood of each knot, and gluing the resulting boundary $T^2$s by
a diffeomorphism.  For our purposes, we will let the diffeomorphism be
the one which identifies oriented meridians and longitudes for the
Seifert framings of each knot.

This procedure does two things, it removes two copies of $S^1\times
\D^2$ with a chosen framing and orientation, and it replaces them with
an $S^1 \times S^1 \times I$.  Together, these are the boundary of
$S^1\times \D^2 \times I$.  Thus we may think of forming a sewn up link
exterior as the result (on the boundary) of adding a round
$4$-dimensional $1$-handle to $B^4$ so that the feet of the round
$1$-handle are the two knots, each with the proper framing.

Now, consider a projection of a link $L$ in $S^3$ with oriented
components $K_1$, $K_2$ and a small region in the projection where
$K_1$ and $K_2$ run parallel but in opposite directions.  We can then
connect $K_1$ and $K_2$ via an arc.   See
Figure~\ref{fig:hoste2}.  Attach the round handle as above to form the
sewn up link exterior for $K_1$ and $K_2$.  Note that we can choose the
attaching map of the round handle so that in the corresponding
Morse-Bott function, the points $p_1,p_2$ on $K_1,K_2$ where the arc
touches each knot are both connected to the same point $p$ on the
critical $S^1$ by gradient flow lines.

Take a perfect Morse function on the critical $S^1$ of the round
handle so that the index zero critical point is $p$.  This decomposes
the round $1$-handle into a $1$-handle and $2$-handle corresponding to
the $0$- and $1$-handles of the Morse function on $S^1$.

\begin{figure}[htbp]
  \centering
  \includegraphics{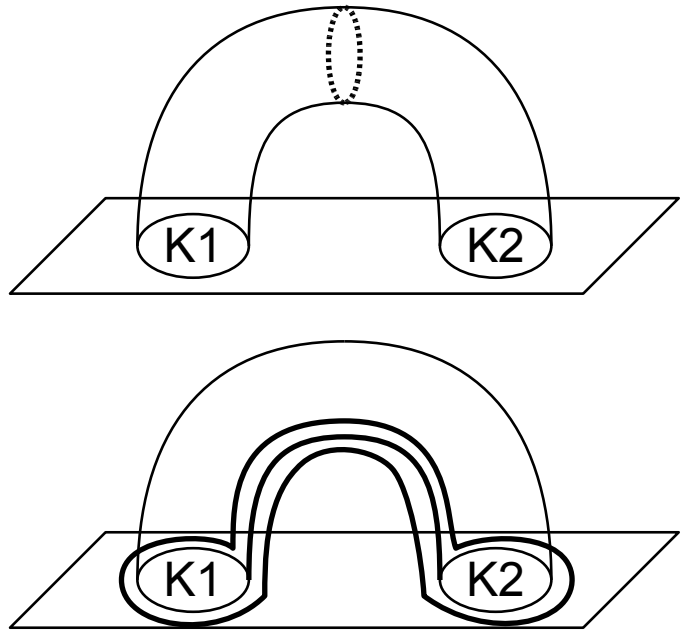}
  \caption{Round handle becomes a $1$ and $2$ handle}
  \label{fig:hoste1}
\end{figure}

Attaching the $1$-handle to $S^3$ results in self connect summing
$S^3$ at the points $p_1,p_2$.  By standard tricks, this is the same as
zero surgery on the unknot around the arc in the second drawing in
Figure~\ref{fig:hoste2}.

The attaching circle of the $2$-handle is the ``band sum'' $K$ of
$K_1$ and $K_2$ as shown in Figure~\ref{fig:hoste1} and the second
picture in Figure~\ref{fig:hoste2}.  Attaching the $2$-handle to the
result (an $S^1\times S^2$) of the previous surgery is then merely
zero surgery on $K$.

\subsection{$4$-dimensional Hoste Move}

In \cite{MR1650308}, the Hoste move shows up in $4$-dimensions with an
$S^1$ equivariance as we cross the $3$-manifold with $S^1$.  When that
is done, the surgeries on knots show up as surgeries on square zero
tori which, by using \cite{MR1492130}, are amenable to computations of
the Seiberg-Witten invariant.  The $4$-dimensional version of the
Hoste move we will discuss here does not assume this $S^1$
equivariance, although local $S^1$ equivariances will occur.

\begin{proposition} \label{prop:4d-hoste1} Consider two embedded,
  oriented square zero tori $T_1,T_2$ in a $4$-manifold $X$.  Suppose
  that $T_1,T_2$ are connected by an annulus $A=S^1\times I$, embedded
  in $X$, so that $\partial A$ consists of an essential curve on each
  torus.  Let each $T_i$ be framed so that $A\cap \partial \nu T_i$ is
  in the subspace of $H_1(\partial \nu T_i)=H_1(T^3)$ generated by the
  pushoffs of loops on $T_i$ with respect to the framing.  Let $\phi$
  be a diffeomorphism $T_1\to T_2$ which identifies the components of
  $\partial A$ in $T_1$ and $T_2$.  Then the self fiber sum,
  $X\fibersum_{T_1=_{\phi}T_2}$, is also the result of surgery on two
  tori: the ``band sum'' of the tori along $A$ and torus given by the
  loop in Figure~\ref{fig:band-sum} in the neighborhood of
  $\theta\times I \subset A$ for each $\theta\in S^1$.
\end{proposition}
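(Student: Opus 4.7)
The approach is to adapt Hoste's $3$-dimensional move of the previous subsection, using the $S^1$-factor of the annulus $A = S^1 \times I$ in place of the $S^1$ of Hoste's round $1$-handle. The heuristic is that, in a tubular neighborhood of $T_1 \cup A \cup T_2$, the self fiber sum $X \fibersum_{T_1 =_\phi T_2}$ is realized as an $S^1$-family of Hoste sewn-up link exterior modifications, one for each $\theta \in S^1$.

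First I would produce a product structure on a tubular neighborhood $N$ of $T_1 \cup A \cup T_2$. The normal bundle of the annulus is trivial, so $\nu A \cong A \times D^2 = S^1 \times I \times D^2$; combined with the given framings $\nu T_i \cong T^2 \times D^2$ and the hypothesis on $A \cap \partial \nu T_i$, this yields an identification $N \cong S^1 \times N_0$, where $N_0$ is a $3$-dimensional neighborhood containing two circles (the fiberwise copies of the $T_i$) joined by an arc (the fiberwise copy of $A$). I would then apply the $3$-dimensional Hoste move fiberwise over $\theta \in S^1$. In dimension $3$, Hoste replaces the sewn-up link exterior of the two circles by $0$-surgery on the band-sum knot together with $0$-surgery on a meridian to the connecting arc. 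Crossing with $S^1$, the sewn-up link exterior becomes the self fiber sum, the $0$-surgery on the band-sum knot becomes a torus surgery on the band-sum torus $T_1 \fibersum_A T_2$ (obtained by excising annular collars of $\partial A$ in each $T_i$ and pasting in two parallel copies of $A$), and the $0$-surgery on the meridian unknot becomes a torus surgery on the dual torus swept out by the meridians of the arcs $\{\theta\} \times I \subset A$ as $\theta$ varies over $S^1$.

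The main obstacle is establishing the product structure $N \cong S^1 \times N_0$ and verifying that the $S^1$-family of $3$-dimensional surgeries assembles into the two claimed torus surgeries on $X$. The pushoff hypothesis on $A \cap \partial \nu T_i$ is essential here: it prevents $A$ from winding around the meridian direction of $\nu T_i$, so that the fiberwise $3$-dimensional slices glue to a genuine product. A secondary subtlety is tracking framings, since a torus surgery in dimension $4$ is not determined by the torus alone; I would verify that the natural framings coming from the product structure on $N$ (equivalently, the fiberwise $0$-framings of Hoste's two surgery circles) reproduce precisely $X \fibersum_{T_1 =_\phi T_2}$, and that the result is independent of the auxiliary product structure on $\nu A$.
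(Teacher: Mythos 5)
Your proposal matches the paper's argument in all essentials: both realize the self fiber sum as an $S^1$-family (over the circle factor of $A$) of the two surgeries in Hoste's $3$-dimensional move, producing the band-sum torus together with the dual torus swept out by meridians of the arcs $\{\theta\}\times I$, and both invoke the zero-winding hypothesis on $A\cap\partial\nu T_i$ exactly as you indicate to control the framings. The paper merely packages the fiberwise picture differently, attaching a $5$-dimensional toric $1$-handle to $X\times I$ and splitting it into a round $1$-handle and a round $2$-handle via a perfect Morse function on the $S^1_\beta$ factor; this is precisely the $S^1$-equivariant version of your fiberwise application of the $3$-dimensional move.
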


\begin{figure}[htbp]
  \centering
  \includegraphics{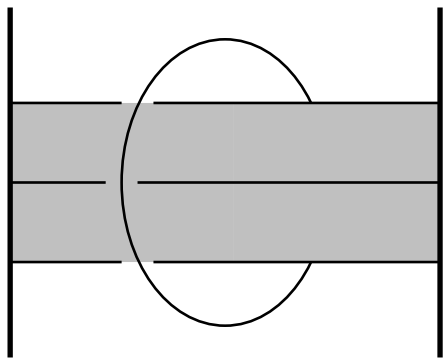}
  \caption{Band sum}
  \label{fig:band-sum}
\end{figure}

\begin{proof}[Proof of Proposition~\ref{prop:4d-hoste1}]
  We can reinterpret the fiber sum as the result (on the boundary) of
  adding a $5$-dimensional toric $1$-handle (a $T^2\times \D^2\times
  I$) to $X\times I$ so that the attaching region, $T^2\times \D^2
  \times \pm 1$ is identified with the normal bundles to $T_1$ and
  $T_2$ with their chosen framing.  This results in deleting the two
  $T^2\times \D^2$s and replacing them with a $T^2\times \partial \D^2
  \times I$.  This identification is determined by choice of framings
  for $T_1,T_2$ and a diffeomorphism $\phi$ between them.

  Choose a factorization $T_1\cong_{\phi} T_2 \cong S^1_\alpha\times
  S^1_\beta$ so that the $S^1_\alpha$ factor is $\partial A$ in both
  $T_i$s.  The critical $T^2$ for the $5$-dimensional toric handle is
  identified with the $T_i$s by the gradient flow.  Pick a perfect
  Morse function on $S^1_\beta$ and perturb the Morse-Bott function on
  the $5$-dimensional handle by an extension of it.  This gives us a
  reinterpretation of the $5$-dimensional toric $1$-handle as two
  round ($S^1$) handles --- a round $2$-handle and a round $1$-handle
  --- corresponding to the critical points of the Morse function on
  $S^1_\beta$.

  Consider the round $1$-handle first.  Such a handle is a $S^1\times
  \D^3 \times I$ so that it attaches along $S^1\times \D^3 \times \pm
  1$.  By construction, the two $S^1\times \D^3$s are neighborhoods of
  the components of $\partial A$, with framing given by the inward
  normal along $A$, a vector field along $\partial A$ parallel to
  $T_i$, and a third vector field defined by orthogonality to these
  and the tangent space to $\partial A$.

  Consider a neighborhood of $A$ which is $S^1$ equivariant, matching
  the $S^1$ equivariance of $A=I\times S^1$.  When small, such a
  neighborhood is diffeomorphic to $S^1$ times the ``H'' in
  Figure~\ref{fig:band-sum}.  (The vertical lines are in $T_i$; the
  horizontal, slices of $A$.) Attaching the round $1$-handle is the
  same as (equivariantly) self-connect summing at the places where the
  vertical lines intersect the horizontal core of the band.  In each
  $3$-manifold slice, this is equivalent to performing zero surgery on
  the loop linking the band in Figure~\ref{fig:band-sum}.  Then, in
  turn, this gives us a square zero torus $L$ and a surgery to perform
  on it within the neighborhood of $A$.

  Now, a $5$-dimensional round $2$-handle is a $S^1\times \D^2 \times
  \D^2$ attached along $S^1\times \D^2 \times \partial \D^2$.  Outside
  of the neighborhood of $A$, the attaching torus is equal to the
  $T_i$.  (two annuli) Inside the neighborhood of $A$, the attaching
  torus $T$ 
  is given by $S^1$ times the boundary of the band in
  Figure~\ref{fig:band-sum}.  (two more annuli) The framing of this
  torus is given by the framings of the $T_i$ outside the neighborhood
  of $A$ and by the inward normal to the band on (each slice of) the
  inside.  This can be seen by band summing pushoffs of the $T_i$.  (By
  hypothesis, $A$ has zero winding with respect to our framing so this
  can be done by band summing in the $S^1$ trivialization.  ) $T$
  inherits a factorization $S^1_a\times S^1_b$ from the $T_i$ by
  noting that the $S^1_\alpha$ factors of the $T_i$ survive and that
  the $S^1_\beta$ factors are themselves band summed.  Attaching the
  round $2$-handle then performs a surgery on this torus which sends
  $\partial \D^2$ to the $S^1_b$ factor.
\end{proof}

\subsubsection{$4$-Dimensional Hoste Move for a Twin and Torus}
\label{sec:twin-torus}

Let us now examine how Proposition~\ref{prop:4d-hoste1} can be
described in terms of surgery information on projections.  First we
will look at the case when the tori come from surgeries on a twin and
a torus.

Let $X$ be the result of standard surgery on a ribbon twin $\Tw$ and
$(0,0)$ surgery on a ribbon torus $T$ in $S^4$, both specified by a
virtual knot diagram as in sections~\ref{subsec:virtual-knot-diag} and
\ref{subsec:surgery-diagram}.  Note that the cores $T_\Tw,T_T$ of each
surgery inherit preferred framings from their surgery description.

Suppose that $\Tw$ and $T$ have a classical knot crossing in the
virtual knot diagram and hence a ribbon intersection.  Then, in a
projection to $\R^3$, there is a neighborhood as in
Figure~\ref{fig:crossing-change-band}.  Consider the annuli shown in
the figure. Each of these annuli are isotopic. This can be seen by the
fact that on the left of each picture, the horizontal surface
overcrosses the vertical surface so the annulus must lie completely
under the horizontal surface to the left of the ribbon
singularity. Thus we can isotope the annulus freely on the left of the
ribbon singularity. Similarly, on the right the annulus lies
completely above the horizontal surface and so we can isotope it
freely on that side.

Now let $B$ be the particular representation of the annulus
corresponding to the particular orientations of the virtual knot
crossing depicted below it. Now, $B$ connects an equator $\gamma_\Tw$
to one of $K_i$ in $\Tw=K_1\union K_2$ to a essential curve $\gamma_T$
on the torus. Since we have done $(0,0)$ surgery on the torus (in
virtual knot notation), the surgery curve (in the projection notation)
meets $\gamma_T$ once.

\begin{figure}[htbp]
  \centering
  \includegraphics{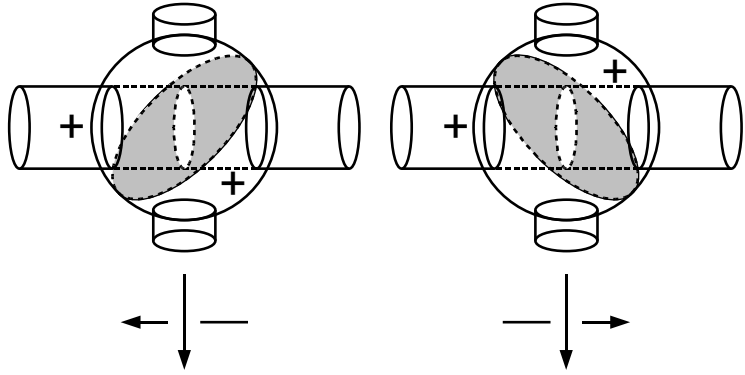}
  \caption{Annulus $B$ used for smoothing}
  \label{fig:crossing-change-band}
\end{figure}

The method of constructing $B$ ensures that $\partial B$ consists of
essential curves on the cores of the twin and torus surgeries when $B$
is extended by the projection to the surgered manifold.  This means
that we can apply proposition~\ref{prop:4d-hoste1} once we have chosen
the diffeomorphism $\phi$ between $T_\Tw,T_T$.  We have already
required that $\phi$ identify the components of $\partial B$.  $\phi$
is then determined when we require that it identify the following:
\begin{enumerate}
\item the projection to $T_T$ of $\text{pt}_1\times\partial \D^2$, where
  $\text{pt}_1\in T$, $T\times \D^2\subset S^4$ the normal bundle, and
\item the projection to $T_\Tw$ of $\text{pt}_2\times\partial \D^2$,
  where $\text{pt}_2\in \Tw\setminus (S^2\text{ intersection points})$
  and lies in the $S^2$ which does not contain the preimage of the
  double points.  $\partial \D^2$ is the boundary of the fiber of the
  normal bundle to this $S^2$.
\end{enumerate}

With this data fixed, proposition~\ref{prop:4d-hoste1} can be applied.
The result is that $$X\setminus(\nu T_{\Tw}\union
\nu T_T))=S^4\setminus(\nu \Tw \union \nu T)$$ sewn up by $\phi$ is
diffeomorphic to $$S^4\setminus(\nu (\Tw \fibersum_{B} T) \union \tau)
\union T^2\times \D^2 \union T^2\times \D^2$$ where
\begin{enumerate}

\item $\Tw\fibersum_{B} T$ denotes the ``band sum'' of $\Tw$ and $T$
  along $B$,

\item the first $T^2\times \D^2$ is glued to $\partial
  \nu( \Tw\fibersum_{B} T)$ by the standard surgery on twins,

\item $\tau$ is the torus given by the loop in
  Figure~\ref{fig:band-sum}, and

\item the second $T^2\times \D^2$ is glued to $\partial \nu \tau$ so
  that $\partial \D^2$ goes to the nullhomologous pushoff of the loop
  in Figure~\ref{fig:band-sum}.

\end{enumerate}

Finally, we can isotopy this region in $\Tw\fibersum_{B} T$ to
be as shown in one of the pictures in the top row of
Figure~\ref{fig:smoothing}.  The appropriate smoothing
depends on the orientation of the horizontal surface and corresponds
to the selection of band previously.  In Figure~\ref{fig:smoothing},
the correspondence of the orientation of the horizontal surface to the
virtual knot diagrams is show by the original diagrams to the lower
left of each smoothing and the smoothed virtual knot diagrams to the
lower right.

\begin{figure}
  \centering
  \includegraphics{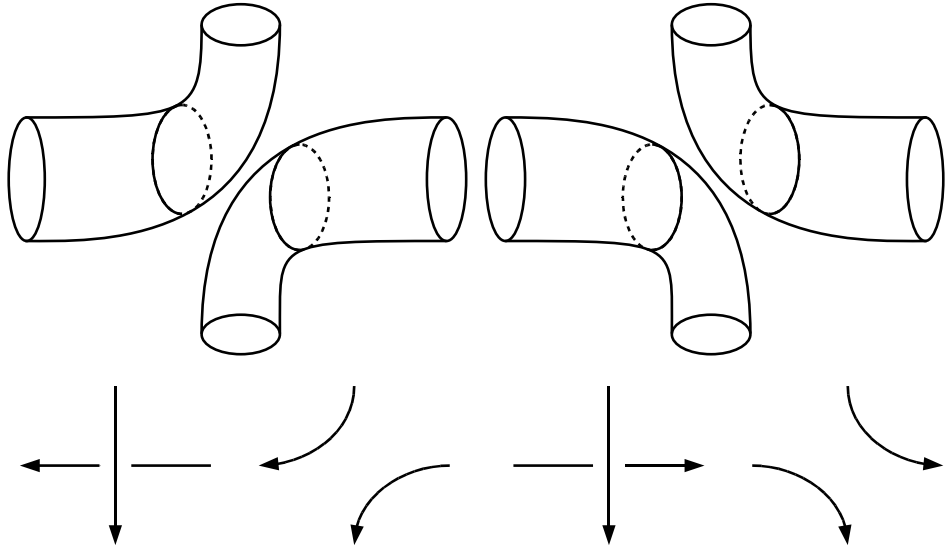}
  \caption[Ribbon intersection, smoothed]{Two smoothings of the
    intersection on Figure~\ref{fig:crossing-change-band} with
    corresponding virtual knot diagrams}
  \label{fig:smoothing}
\end{figure}

\subsubsection{$4$-Dimensional Hoste Move for two Tori}\label{sec:torus-torus}

We will require the Hoste Move between two tori in only one
case. Suppose that both tori lie in a neighborhood diffeomorphic to
$S^1\times \D^3$ so that in each $\theta\times D^3$ the tori are as
shown in Figure~\ref{fig:hoste3}.  The we only need to describe one
aspect of the $3$-dimensional local picture --- the gluing map for the
sewn up exterior for the right hand side of Figure~\ref{fig:hoste3}.
This map is given by identifying meridians to each loop and the
pushoffs along the obvious once punctured discs to each.

\begin{figure}[htbp]
  \centering
  \includegraphics{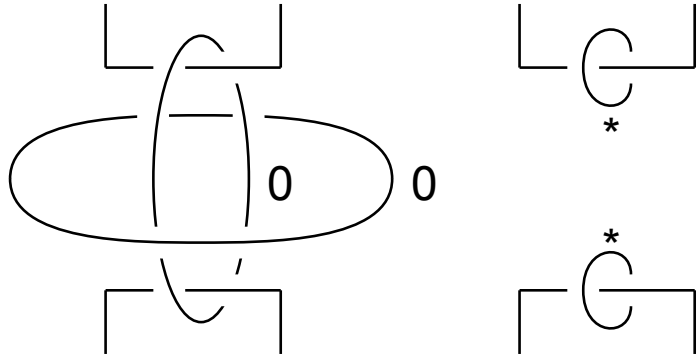}
  \caption[Hoste Move on two tori]{$S^1$ times the left is $S^1$ times
    the right with the two starred tori sewn up}
  \label{fig:hoste3}
\end{figure}



\subsection{$4$-Dimensional Crossing Change} \label{sec:crossing-change}

Consider a classical crossing in a virtual knot diagram for a twin
and/or a torus and the corresponding annulus from
Figure~\ref{fig:crossing-change-band2}. Notice that the correspondence
is reversed from that of the $4D$ Hoste move. As before, both bands
shown are isotopic. Push the horizontal surface along the annulus in
Figure~\ref{fig:crossing-change-band2} to get the configuration in
Figure~\ref{fig:ribbon-sing-isotope}.

\begin{figure}[htbp]
  \centering
  \includegraphics{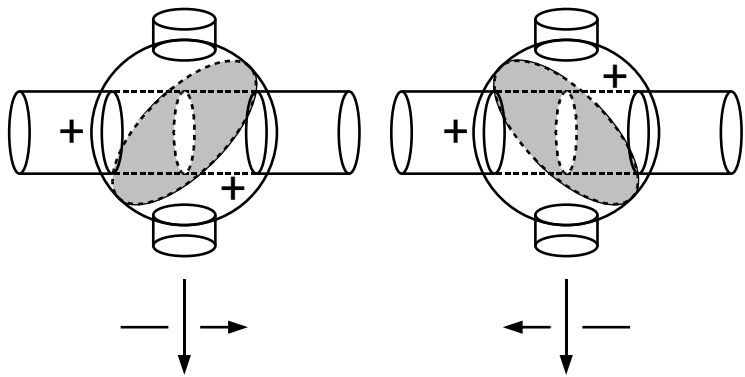}
  \caption{Annulus $B$ used for crossing change}
  \label{fig:crossing-change-band2}
\end{figure}

\begin{figure}[htbp]
  \centering
  \includegraphics{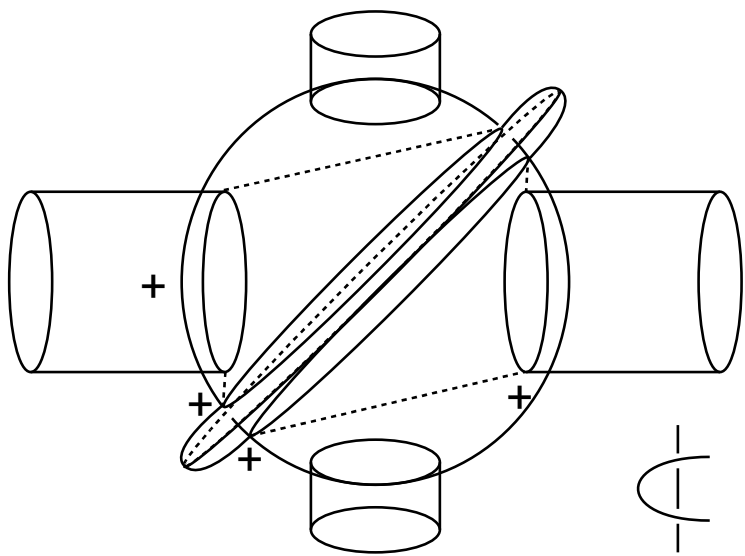}
  \caption[Two new sets of double points]{Diagram obtained from the
    first picture in Figure~\ref{fig:crossing-change-band2} by pushing
    horizontal tube along band, over crossing the vertical tube in the
    two new sets of double points.  The new pair of crossings is
    locally modeled on $S^1$ times the diagram in the lower right.
    The diagram corresponding to the second picture in
    Figure~\ref{fig:crossing-change-band2} is similar.}
  \label{fig:ribbon-sing-isotope}
\end{figure}

Focus our attention to the lower of the two new loops of crossings in
Figure~\ref{fig:ribbon-sing-isotope} (or the corresponding picture for
the other band.) Call this crossing $C$. Local to $C$, we have the
model of $S^1$ times a $3$-dimensional oriented knot crossing. Note
that if we form an annulus by taking a path from the lower to the
upper double point in each $3$-manifold picture, we get an annulus $A$
which is isotopic to the annulus $B$ from before.

Perform one of the surgeries on a torus indicated by the
$3$-dimensional pictures in Figure~\ref{fig:crossing-change-local}
localized at $C$ in Figure~\ref{fig:ribbon-sing-isotope} (or the
corresponding picture for the other band.) The appropriate surgery is
the top for the first picture of
Figure~\ref{fig:crossing-change-band2} and the lower for the
second. This changes the crossing $C$ from an overcrossing of the
horizontal surface to an under crossing, resulting in
Figures~\ref{fig:crossing-change-isotopy}
and~\ref{fig:crossing-change-isotopy2}. Finally, we perform the
isotopies indicated in these figures and see that our result has
changed the crossing type from a classical $+$ to $-$ or from a
classical $-$ to $+$ in the virtual knot diagram.

It is important to note that the torus which we have surgered is
isotopic to the surgered torus $\tau$ of
section~\ref{sec:twin-torus}. We identify the two via the isotopy of
the band $B$.

\begin{figure}[htbp]
  \centering
  \includegraphics{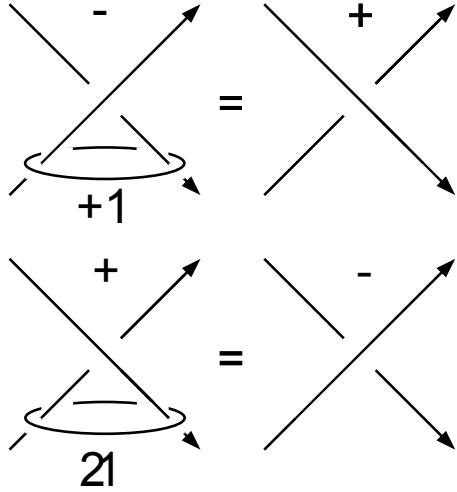}
  \caption{Isotopy in $3$-dimensional picture}
  \label{fig:crossing-change-local}
\end{figure}

\begin{figure}[htbp]
  \centering
  \includegraphics{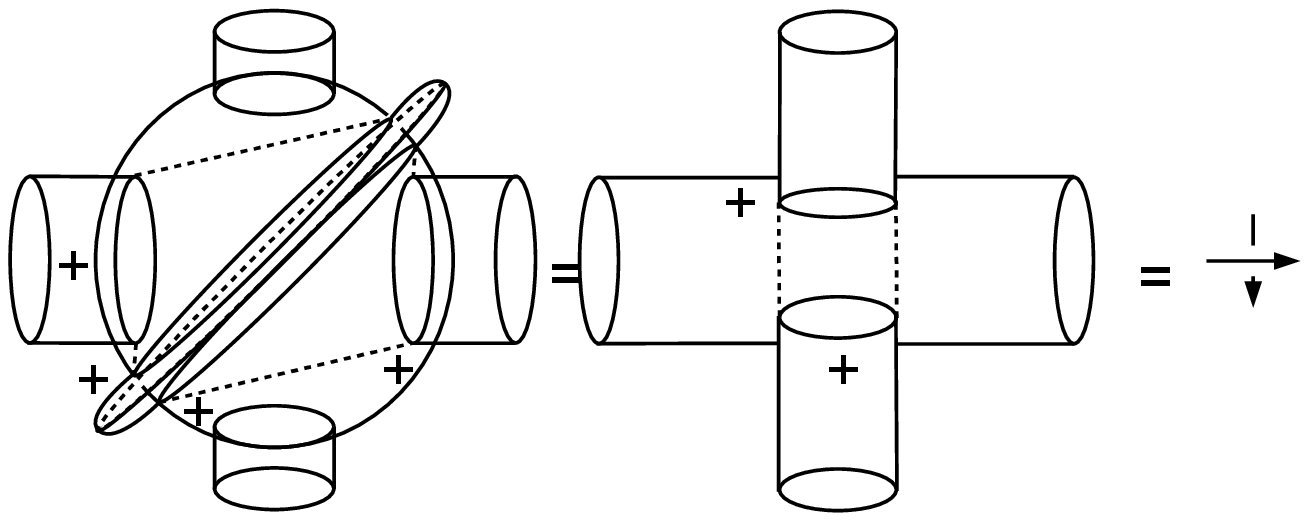}
  \caption[Isotopy of surgered $+$ crossing to $-$ crossing]{Isotopy of
    first picture in Figure~\ref{fig:crossing-change-band2} after
    surgery and the corresponding virtual knot crossing of result}
  \label{fig:crossing-change-isotopy}
\end{figure}

\begin{figure}[htbp]
  \centering
  \includegraphics{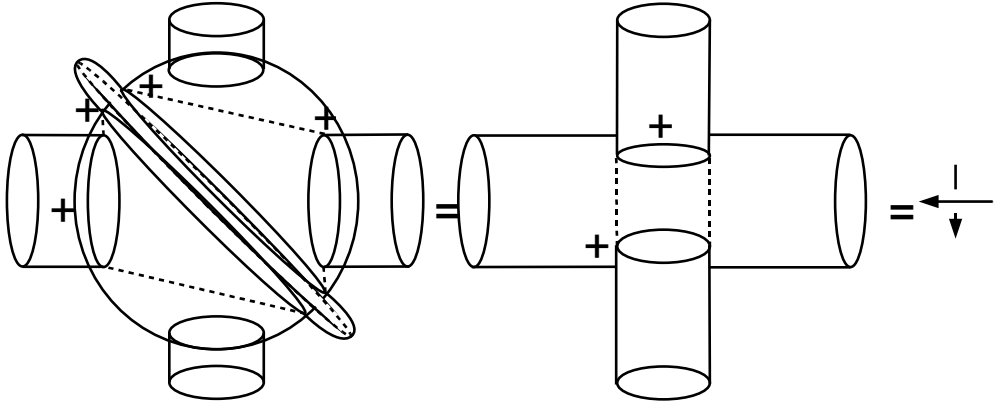}
  \caption[Isotopy of surgered $-$ crossing to $+$ crossing]{Isotopy of
    second picture in Figure~\ref{fig:crossing-change-band2} after
    surgery and the corresponding virtual knot crossing of result}
  \label{fig:crossing-change-isotopy2}
\end{figure}

\section{Calculation of the Invariant for Certain Ribbon Twins}

Consider a twin $\Tw=K_1\cup K_2$ given by a virtual knot presentation
and the manifold $X(\Tw)=E(2)_{\Tw}$.  Suppose that the virtual knot
presentation for $K_1$ contains a classical crossing; so $K_1$ has a
ribbon intersection with itself.  Let $\Tw_+$, $\Tw_-$ and $\Tw_0$ be
the results of replacing the crossing in the virtual knot diagram with
the three options in Figure~\ref{fig:skein-relation}. Note that
$\Tw_0$ will actually be a twin and a torus.

\begin{figure}[htbp]
  \centering
  \includegraphics{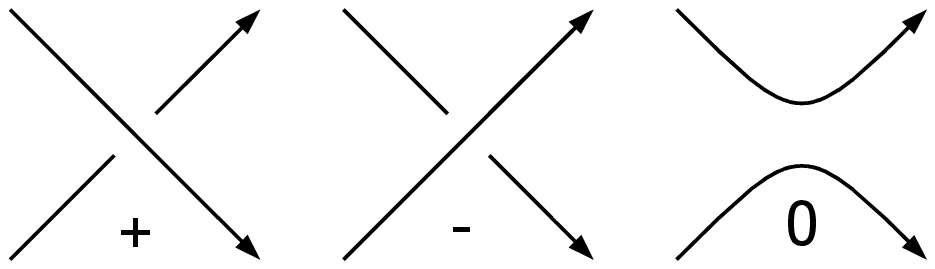}
  \caption{}
  \label{fig:skein-relation}
\end{figure}

Consider the square zero torus $\tau$ from the previous sections.
Now, $\partial \nu \tau$ admits a decomposition $\partial \nu \tau=
S^1_\alpha\times S^1_\beta \times S^1_\gamma$ from it lying in the
$S^1$ equivariant neighborhood of the annulus $B$. Namely, in each
$3$-manifold slice of the neighborhood, $\tau$ is given by a loop
$\alpha$ linking the slice of $B$ once. Then $S^1_\alpha\times
S^1_\beta = \tau$ where $\beta$ is $\text{pt} \times S^1$ in the
equivariant neighborhood of $B$. Finally $S^1_\gamma = \partial \D^2$
finishes the decomposition.

Log transform surgery on $\tau$ is defined by removing $\nu \tau\cong
T^2\times \D^2$ and replacing with another copy of $T^2\times
\D^2$. Such a surgery is uniquely determined up to diffeomorphism by
the image of $\partial \D^2$ in $\partial \nu \tau \cong T^3$. Using
our decomposition above, we can describe such a surgery by a triplet
of integers $(a,b,c)$ with no common factor. Such a triplet gives
specifies the isotopy type of the curve to which we will glue
$\partial D^2$. We will write $X_{\tau(a,b,c)}$ for the result of the
$(a,b,c)$ log transform surgery on $X$.

From section~\ref{sec:crossing-change} we see that we can move from
$\Tw_-$ to $\Tw_+$ by surgery on $\tau\subset S^4\setminus \Tw_-$
which in each $3$-manifold slice is $+1$ surgery on the loop which
gives $\tau$. Thus we can move from $\Tw_-$ to $\Tw_+$ by a $(0,1,1)$
log transform on $\tau$. Note that $(0,0,1)$ surgery on $\tau$ is the
identity. 

Morgan, Mrowka, and Szabo's formula in \cite{MR1492130} then gives
\begin{displaymath}
  SW(E(2)_{\Tw_+}) = SW(E(2)_{\Tw_-}) + SW(E(2)_{\Tw_-,\tau(0,1,0)})
\end{displaymath}
Now by our description in section~\ref{sec:twin-torus},
$E(2)_{\Tw_-,\tau(0,1,0)}$ is the sewn up twin/torus exterior of
$\Tw_0$ fiber summed to $E(2)$. Thus, by our definition of $I$,
\begin{displaymath}
  SW(E(2)_{\Tw_-,\tau(0,1,0)}) = (t-t^{-1})I(\Tw_0),  
\end{displaymath}
while
\begin{displaymath}
  I(\Tw_+)=SW(E(2)_{\Tw_+})
\end{displaymath}
and
\begin{displaymath}
  I(\Tw_-) = SW(E(2)_{\Tw_-}).
\end{displaymath}
Therefore,
\begin{displaymath}
  I(\Tw_+) = I(\Tw_-) + (t-t^{-1}) I(\Tw_0)
\end{displaymath}

Now consider a twin $\Tw=K_1\cup K_2$ and torus $\T$ given by a
virtual knot presentation and the manifold $X(\Tw)=E(2)_{\Tw}$.
Suppose that the virtual knot presentation contains a classical
crossing between $K_1$ and $\T$; so $K_1$ has a ribbon intersection
with $T$.  Let $\Tw_+$, $\Tw_-$ and $\Tw_0$ be the results of
replacing the crossing in the virtual knot diagram with the three
options in Figure~\ref{fig:skein-relation}. Note that $\Tw_\pm$ will
each be a twin and a torus while $\Tw_0$ will be a single twin.

As before, we consider the torus $\tau \subset S^4\setminus \Tw_-$.
From section~\ref{sec:crossing-change} we see that we can move from
$\Tw_-$ to $\Tw_+$ by surgery on $\tau\subset S^4\setminus \Tw_-$
which in each $3$-manifold slice is $+1$ surgery on the loop which
gives $\tau$. Thus we can move from $\Tw_-$ to $\Tw_+$ by a $(0,1,1)$
log transform on $\tau$. Note that $(0,0,1)$ surgery on $\tau$ is the
identity.

Morgan, Mrowka, and Szabo's formula in \cite{MR1492130} then gives
\begin{displaymath}
  SW(E(2)_{\Tw_+}) = SW(E(2)_{\Tw_-}) + SW(E(2)_{\Tw_-,\tau(0,1,0)})
\end{displaymath}
Now, since each $\Tw_\pm$ are composed of a torus and a twin, the
manifolds $E(2)_{\Tw_\pm}$ are each sewn up twin/torus exteriors fiber
summed to $E(2)$. Thus $SW(E(2)_{\Tw_\pm})=(t-t^{-1})I(\Tw_{\pm})$.

Nearby $\tau$ we have a local model for $\Tw_-$ given in the left
picture of Figure~\ref{fig:hoste4}. If we perform the $4D$ Hoste move
on $E(2)_{\Tw_-,\tau(0,1,0)}$ at an annulus equal to $S^1$ times the
horizontal line in this picture, we obtain the picture to the right in
Figure~\ref{fig:hoste4}.

\begin{figure}[htbp]
  \centering
  \includegraphics{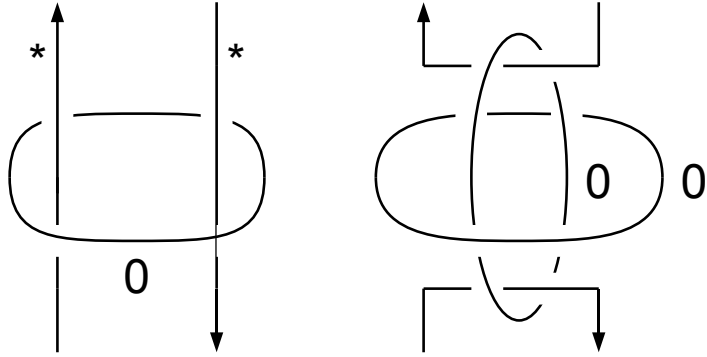}
  \caption{Hoste move nearby $\tau$}
  \label{fig:hoste4}
\end{figure}

Note that this picture is identical to that of
Figure~\ref{fig:hoste3}. Applying the version of the $4D$ Hoste move
from section~\ref{sec:torus-torus}, we get a local picture equal to
that in the right hand side of Figure~\ref{fig:hoste3}. The tori in
this picture are each isotopic to the torus $\partial
\D^2_1\times \partial \D^2_2$ -- where $\partial D^2_i$ is the normal
bundle to $K_i$, the knots which comprise $\Tw_0$. This torus can also
be described as one of the components of $\partial \nu K_1
\cap \partial \nu K_2$. In the fiber sum manifold,
$E(2)_{\Tw_-,\tau(0,1,0)}$, each torus we have just described is
isotopic to the fiber $F$. Thus, $E(2)_{\Tw_-,\tau(0,1,0)}$ is
diffeomorphic to $E(2)_{\Tw_0}\fibersum_{F=F'}$. Therefore,
$SW(E(2)_{\Tw_-,\tau(0,1,0)})=(t-t^{-1})^{2}SW(E(2)_{\Tw_0})$ so
\begin{displaymath}
  SW(E(2)_{\Tw_+}) = SW(E(2)_{\Tw_-}) + (t-t^{-1})^{2}SW(E(2)_{\Tw_0})
\end{displaymath}
and
\begin{displaymath}
  (t-t^{-1})I(\Tw_+) = (t-t^{-1})I(\Tw_-) + (t-t^{-1})^2 I(\Tw_0)
\end{displaymath}
so
\begin{displaymath}
  I(\Tw_+) = I(\Tw_-) + (t-t^{-1}) I(\Tw_0)
\end{displaymath}

\subsection{Relation to Giller's polynomial}

Recall the definition of Giller's polynomial given by Equations
(\ref{eq:giller-conway-relation}), (\ref{eq:giller-unknot-relation}),
and (\ref{eq:giller-split-relation}). These are the Conway-style
relation, the value on the unknotted sphere, and the vanishing of the
polynomial for split links, respectively. We now discuss similar
results for our invariant.

That $I(\Tw_{std})=1$ was shown at
Equation~(\ref{eq:I-std-twin-relation}).

Suppose that $\Tw=K_1\union K_2$ and $T$ are ribbon with a virtual
knot presentation which is split or only has pairwise virtual
crossings. Then using the virtual Reidemeister moves of
Figures~\ref{fig:virt-reidemeister} and~\ref{fig:reidemeister-move-f},
we can separate $\Tw$ and $T$. This means that the projections of
$\Tw$ and $T$ are separated by an $S^3$. So $\Tw$ and $T$ are
separated by an $S^3$. This means that the manifold formed by
surgering $\Tw$ and $T$ is a connected sum. Now, on the $T$ side of
the connected sum, the intersection form on $H_2$ will be a hyperbolic
pair as will the intersection form on the side given by surgery on
$\Tw$. Thus we are given a manifold which is the connected sum of two
manifolds each with $b_+>0$. Therefore, the Seiberg-Witten invariant
of this surgery vanishes. It follows that $SW(E(2)_{\Tw,T})=0$ and
that
\begin{equation}
  \label{eq:I-split-link-relation} 
  I(\Tw,T)=0 \text{ for } \Tw, T \text{ split.}
\end{equation}

Now let us consider the Conway-style relation for Giller's polynomial
we initially discussed in Section~\ref{sec:giller}. This relation
involves crossing changes and resolution at individual loops of double
points. Now, in our crossing change surgery, we had a similar action
of changing the lower crossing from
Figure~\ref{fig:ribbon-sing-isotope}. In the diagrams for our $4D$
Hoste move, we took a different local projection to illustrate the
appropriate surgery. However, smoothing the lower crossing from
Figure~\ref{fig:ribbon-sing-isotope} also yields a smoothing in the
virtual knot diagram.

In other words, selecting a particular set of double points to apply
the relation in Equation~(\ref{eq:giller-conway-relation}) to,
$\Delta_G(\Tw)$ computes $I(\Tw)$. Therefore, $\Delta_G(\Tw)=I(\Tw)$
for ribbon knots. We cannot make a stronger statement of equality
however, as the relation from
Equation~(\ref{eq:giller-conway-relation}) allows us to move into
configurations of surfaces which are inaccessible to the invariant
$I$.

\subsection{The Class of Ribbon Twins}

We now make some remarks on computations.

Now suppose that $\Tw=K_1\union K_2$ is ribbon with a virtual knot
presentation which only has virtual crossings. Then we can use the
virtual Reidemeister moves $B$ and $F$ from
Figures~\ref{fig:virt-reidemeister} and~\ref{fig:reidemeister-move-f}
to completely unknot the diagram for $\Tw$. Therefore, $\Tw=\Tw_{std}$
and so $I(\Tw)=I(\Tw_{std})=1$.

Suppose that $\Tw=K_1\union K_2$ is a twin possibly with accompanying
torus $T$ with the configuration ribbon. Suppose that we reverse the
orientation of one of the $K_i$ or of $T$. This reverses the
orientation of the torus $T_\Tw$ (or $T_T$) and induces a chance in
homology orientation from the change of sign in pairing with
$T_\Tw$. Then, if $\widehat{\Tw}=\overline{K_1}\union K_2$,
$I(\widehat{\Tw})=-I(\Tw)$. Similarly, $I(\Tw,\overline{T})=-I(\Tw,T)$.

Currently, the author is unaware if crossings can be chosen so that
the tree of terminates in standard twins and unlinked twin/torus
pairs. The previous work of Fintushel and Stern guarantees that the
process terminates when $K_1$ in $\Tw=K_1\union K_2$ is knotted with
only classical crossings and $K_2$ is unknotted with no ribbon
intersections with $K_1$. The presence of virtual crossings in the
diagrams complicates the general case. Additionally, the author has
yet to find a general method of dealing with ribbon intersections
between the $K_i$.

However, there seem to be a fairly large number of new examples which
we may compute using the current tools. The first we will compute is
the twin version of the example from Giller's paper. Call this twin
$\Tw_G$. This was encountered previously in
Figure~\ref{fig:giller-twin}.

Follow the computation through
Figures~\ref{fig:computation-giller-twin},
\ref{fig:computation-giller-twin2},
\ref{fig:computation-giller-twin3}, and
\ref{fig:computation-giller-twin4}. In
Figure~\ref{fig:computation-giller-twin4}, we arrive at configurations
$C,D,E,$ and $F$. Here $C$ and $F$ are isotopic to standard twins, so
$I(C)=I(F)=1$. The other configurations $D,E$ differ by the
orientation of the torus and so $I(D)=-I(E)$. Therefore, 
\begin{eqnarray*}
  I(\Tw_G)
  & = & I(A)+(t-t^{-1})I(B) \\
  & = & I(C)+(t-t^{-1})I(D)+(t-t^{-1})I(E)+(t-t^{-1})^2 I(F) \\
  & = & 1 + 0 + (t-t^{-1})^{2} \\
  & = & t^{-2}-1+t^{2}
\end{eqnarray*}

\begin{figure}[htbp]
  \centering
  \includegraphics{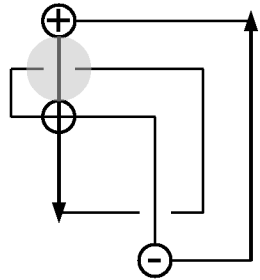}
  \caption{Giller Twin $\Tw_G$ with highlighted crossing}
  \label{fig:computation-giller-twin}
\end{figure}

\begin{figure}[htbp]
  \centering
  \includegraphics{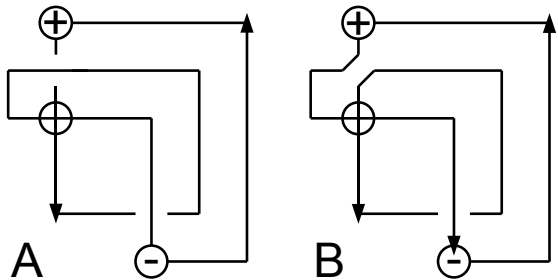}
  \caption{$I(\Tw_G)=I(A)+(t-t^{-1})I(B)$}
  \label{fig:computation-giller-twin2}
\end{figure}

\begin{figure}[htbp]
  \centering
  \includegraphics{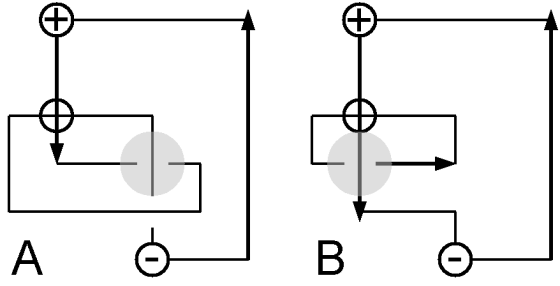}
  \caption{Isotopy of $A$ and $B$ from
    Figure~\ref{fig:computation-giller-twin2} with highlighted crossings}
  \label{fig:computation-giller-twin3}
\end{figure}

\begin{figure}[htbp]
  \centering
  \includegraphics{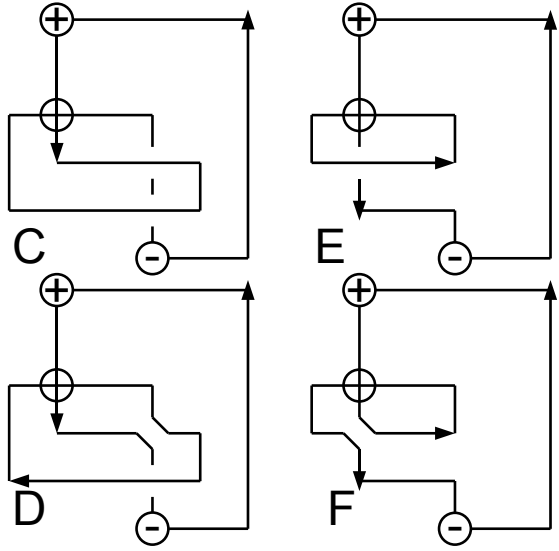}
  \caption{$I(\Tw_G)=I(C)+(t-t^{-1})I(D)+(t-t^{-1})I(E)+(t-t^{-1})^2 I(F)$}
  \label{fig:computation-giller-twin4}
\end{figure}

Now let us look at the twin in Figure~\ref{fig:unknot-pair-twin}, a
twin in which both $2$-knots are unknots but which pairwise have
ribbon intersections. Call this twin $\Tw_U$. Our current tools do not
allow us to deal directly with pairwise ribbon intersections.

Follow the computation through Figures~\ref{fig:unknot-pair-twin1},
\ref{fig:unknot-pair-twin2}, and \ref{fig:unknot-pair-twin3}. We
arrive at configurations $H,K,$ and $L$. Here $H$ and $L$ are isotopic
to standard twins, so $I(H)=I(L)=1$. The other configurations $K$
contains a separated torus so $I(K)=0$. Therefore,
\begin{eqnarray*}
  I(\Tw_U)
  & = & I(H)-(t-t^{-1})I(J) \\
  & = & I(H)-(t-t^{-1})I(K)+(t-t^{-1})^2 I(L) \\
  & = & 1 + 0 + (t-t^{-1})^{2} \\
  & = & t^{-2}-1+t^{2}
\end{eqnarray*}

Finally, we remark on uniqueness and related topics. In what is our
Artin spun case, Fintushel and Stern have conjectured that their knot
surgery construction yields nondiffeomorphic manifolds for
``essentially different'' knots. (Here, ``essentially different''
means that two knots are not isotopic, they are not mirrors, nor they
isotopic under mirroring of connect-summands.) The Alexander
polynomial does not completely distinguish knots however, so the
Seiberg-Witten invariants in their current form do not shed any light
on their conjecture. Similarly, it seems doubtful that the manifolds
$E(2)_{\Tw_G}$ and $E(2)_{\Tw_U}$ are diffeomorphic, but with the
Seiberg-Witten invariants being equal, we have no obvious way in which
to distinguish them. In particular, it seems possible that
$E(2)_{\Tw_U}$ is diffeomorphic to $E(2)_{\Tw_{K}}$ where $\Tw_K$ is
the Artin spin of the left handed trefoil.

Also, results of C. Taubes in \cite{MR1306023} show that a manifold
$X$ with $b_+>2$ admits a symplectic form, the leading term in $SW(X)$
will have coefficient equal to one. The converse to this statement is
known to be false by work of Fintushel and Stern in
\cite{MR1492129}. In the classical (or Artin spun) case, it is
possible to construct a symplectic form on $E(2)_{\Tw}$ when the
classical knot $K$ from which $\Tw$ is constructed is a fibered
knot. While it may be possible to rephrase this construction in terms
of twins, it is unclear what topological conditions are required on
$\Tw$ to achieve the same result. (A sufficient condition is that
$S^4_{\Tw}$ fibers over $T^2$ or $S^2$.)

We then ask, do $E(2)_{\Tw_G},E(2)_{\Tw_U}$ admit symplectic forms?
What conditions on the exterior of the twin guarantee a symplectic
form?

\begin{figure}[htbp]
  \centering 
  \includegraphics{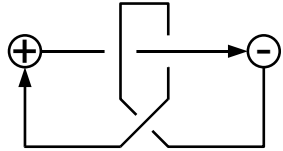}
  \caption{A twin in which both $2$-knots are unknots}
  \label{fig:unknot-pair-twin}
\end{figure}

\begin{figure}[htbp]
  \centering 
  \includegraphics{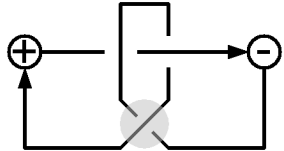}
  \caption{A negative crossing from Figure~\ref{fig:unknot-pair-twin}}
  \label{fig:unknot-pair-twin1}
\end{figure}

\begin{figure}[htbp]
  \centering 
  \includegraphics{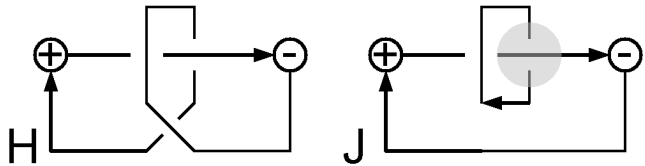}
  \caption{$I(\Tw_U)=I(H)-(t-t^{-1})I(J)$}
  \label{fig:unknot-pair-twin2}
\end{figure}

\begin{figure}[htbp]
  \centering 
  \includegraphics{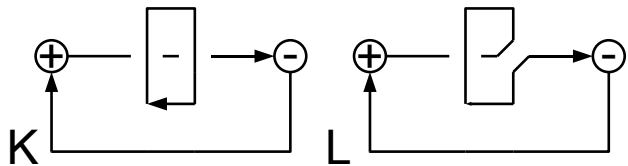}
  \caption{$I(J)=I(K)-(t-t^{-1})I(L)$}
  \label{fig:unknot-pair-twin3}
\end{figure}


\begin{thebibliography}{MMS97}

\bibitem[AC59]{MR0107239}
J.~J. Andrews and M.~L. Curtis.
\newblock Knotted 2-spheres in the 4-sphere.
\newblock {\em Ann. of Math. (2)}, 70:565--571, 1959.

\bibitem[Art26]{Artin1925}
E.~Artin.
\newblock Zur isotopie zweidimensionalen fl\"{a}chen im {$\R^4$}.
\newblock {\em Abh. Math. Sem. Univ. Hamburg}, 4:174--177, 1926.

\bibitem[Fox66]{MR0184221}
R.~H. Fox.
\newblock Rolling.
\newblock {\em Bull. Amer. Math. Soc.}, 72:162--164, 1966.

\bibitem[FS97]{MR1492129}
R.~Fintushel and R.~J. Stern.
\newblock Surfaces in {$4$}-manifolds.
\newblock {\em Math. Res. Lett.}, 4(6):907--914, 1997.

\bibitem[FS98]{MR1650308}
R.~Fintushel and R.~Stern.
\newblock Knots, links, and {$4$}-manifolds.
\newblock {\em Invent. Math.}, 134(2):363--400, 1998.

\bibitem[Gil82]{MR674227}
C.~A. Giller.
\newblock Towards a classical knot theory for surfaces in {${\bf R}\sp{4}$}.
\newblock {\em Illinois J. Math.}, 26(4):591--631, 1982.

\bibitem[GK78]{MR487047}
D.~L. Goldsmith and L.~H. Kauffman.
\newblock Twist spinning revisited.
\newblock {\em Trans. Amer. Math. Soc.}, 239:229--251, 1978.

\bibitem[Gor76]{MR0440561}
C.~McA. Gordon.
\newblock Knots in the {$4$}-sphere.
\newblock {\em Comment. Math. Helv.}, 51(4):585--596, 1976.

\bibitem[Gra77]{MR0461567}
A.~Gramain.
\newblock Sur le groupe fundamental de l'espace des noeuds.
\newblock {\em Ann. Inst. Fourier (Grenoble)}, 27(3):ix, 29--44, 1977.

\bibitem[Hos84]{MR743990}
J.~Hoste.
\newblock Sewn-up {$r$}-link exteriors.
\newblock {\em Pacific J. Math.}, 112(2):347--382, 1984.

\bibitem[Lit79]{MR530058}
R.~A. Litherland.
\newblock Deforming twist-spun knots.
\newblock {\em Trans. Amer. Math. Soc.}, 250:311--331, 1979.

\bibitem[Mar92]{MR1180400}
Y.~Marumoto.
\newblock Stable equivalence of ribbon presentations.
\newblock {\em J. Knot Theory Ramifications}, 1(3):241--251, 1992.

\bibitem[MMS97]{MR1492130}
J.~W. Morgan, T.~S. Mrowka, and Z.~Szab{\'o}.
\newblock Product formulas along {$T^3$} for {S}eiberg-{W}itten invariants.
\newblock {\em Math. Res. Lett.}, 4(6):915--929, 1997.

\bibitem[Mon83]{MR698205}
J.~M. Montesinos.
\newblock On twins in the four-sphere. {I}.
\newblock {\em Quart. J. Math. Oxford Ser. (2)}, 34(134):171--199, 1983.

\bibitem[Mon84]{MR734666}
J.~M. Montesinos.
\newblock On twins in the four-sphere. {II}.
\newblock {\em Quart. J. Math. Oxford Ser. (2)}, 35(137):73--83, 1984.

\bibitem[Rol76]{MR0515288}
D.~Rolfsen.
\newblock {\em Knots and links}.
\newblock Publish or Perish Inc., Berkeley, Calif., 1976.
\newblock Mathematics Lecture Series, No. 7.

\bibitem[Sat00]{MR1758871}
S.~Satoh.
\newblock Virtual knot presentation of ribbon torus-knots.
\newblock {\em J. Knot Theory Ramifications}, 9(4):531--542, 2000.

\bibitem[Tau94]{MR1306023}
C.~H. Taubes.
\newblock The {S}eiberg-{W}itten invariants and symplectic forms.
\newblock {\em Math. Res. Lett.}, 1(6):809--822, 1994.

\bibitem[Yaj64]{MR0172280}
T.~Yajima.
\newblock On simply knotted spheres in {$R\sp{4}$}.
\newblock {\em Osaka J. Math.}, 1:133--152, 1964.

\bibitem[Zee65]{MR0195085}
E.~C. Zeeman.
\newblock Twisting spun knots.
\newblock {\em Trans. Amer. Math. Soc.}, 115:471--495, 1965.

\end{thebibliography}
\end{document}